\newtheorem{theorem}{Theorem}[section]
\newtheorem{remark}{Remark}[section]
\newcommand{\bld}[1]{\hbox{\boldmath$#1$}}    
\newcommand{\Oh}{\Omega_h}
\newcommand{\pol}{\mathcal{P}}
\newcommand{\jmp}[1]{[\![#1 ]\!]}
\newcommand{\avg}[1]{\{#1 \}}
\begin{document}
\title[DG for Shallow Water]{A high-order velocity-based discontinuous Galerkin scheme for the shallow water equations: local conservation, entropy stability, well-balanced property, and positivity preservation}
\author{Guosheng Fu}
\address{Department of Applied and Computational Mathematics and 
Statistics, University of Notre Dame, USA.}
\email{gfu@nd.edu}
 \thanks{We acknowledge the partial support of this work
 from U.S. National Science Foundation through grant DMS-2012031.
 }

\keywords{
Discontinuous Galerkin methods, Shallow water equations, Entropy stable,
Entropy variable, Well-balanced property, Positivity-preserving limiter
}
\subjclass{65N30, 65N12, 76S05, 76D07}
\begin{abstract}
  The nonlinear shallow water equations (SWEs) are widely used to model the
  unsteady water flows in rivers and coastal areas.
  In this work, we present  a novel class of 
locally conservative, entropy stable and well-balanced 
discontinuous Galerkin (DG) methods for the nonlinear shallow water equation with 
 a non-flat bottom  topography.
The major novelty of our work is the use of velocity field as an independent solution unknown in the DG scheme, which is closely related to the entropy variable approach to entropy stable schemes for system of conservation laws proposed by Tadmor \cite{Tadmor86} back in 1986, where recall that velocity is part of the entropy variable for the shallow water
equations. Due to the use of velocity as an independent solution unknown, no specific numerical quadrature rules are needed to achieve entropy stability of
our scheme on general unstructured meshes in two dimensions.

The proposed DG semi-discretization is then carefully combined with the classical
explicit
strong stability preserving Runge-Kutta (SSP-RK) time integrators \cite{GST01}
to yield a locally conservative, well-balanced, and positivity preserving fully discrete scheme. 
Here the positivity preservation property is enforced with the help of a simple scaling limiter.
In the fully discrete scheme, we re-introduce discharge as an auxiliary unknown
variable. In doing so, standard slope limiting procedures can be applied 
on the conservative variables (water height and discharge) without violating the
local conservation property. Here we apply a characteristic-wise TVB limiter \cite{CS98} 
on the conservative variables using the Fu-Shu troubled cell indicator \cite{FS17} in each inner
stage of the Runge-Kutta time stepping to suppress numerical oscillations.
This fully discrete can be readily applied to various SWEs simulations without dry areas where the water height is close to {\it zero}. 

The case with dry areas need further special attention, where the velocity approximation can be unphysically large near cells with a small water height, which may eventually crashes the simulation if no special treatment is used near these cells. Here we propose a simple wetting/drying treatment for the velocity update without violating the local conservation property to enhance the robustness of the overall scheme. 

One- and two-dimensional numerical experiments are presented to demonstrate the
performance of the proposed methods. 
\end{abstract}
\maketitle

\section{Introduction}
\label{sec:intro}
The system of nonlinear shallow water equations (SWEs) is a mathematical model for the fluid movement in
various shallow water environments, where the horizontal scales of motion are
much greater than the vertical scale. The SWEs have been widely 
used to model flow in the river, near-shore ocean, and  earth's atmosphere, etc.
In two dimensions, the inviscid SWEs take the following form:
\begin{subequations}
  \label{swe}
  \begin{alignat}{2}
    \label{swe1}
    h_t + \nabla\cdot (h\bld u)=&\;0,\\
    \label{swe2}
    (h\bld u)_t + \nabla\cdot (h\bld u\otimes \bld u)
    +\frac12g \nabla (h^2) =&\;-g h\nabla b,
  \end{alignat}
\end{subequations}
where $h$ is the water height, $\bld u = (u, v)$ is the velocity field, 
$b(x,y)$ represents the bottom topography and $g$ is the gravitational
constant. 

Below we review the four important properties that the SWEs satisfy, namely the
entropy condition, the {\it lake-at-rest} well-balanced property, 
the
positivity of the water height $h$, and the conservation property.
The system \eqref{swe} is a system of {\it balance laws},
\begin{align}
 \label{bal}
  U_t  + F(U)_x+G(U)_y = -s(x,y,U),
\end{align}
where $U=[h, hu, hv]^T$ is the vector of unknowns, 
$F=[hu, hu^2+\frac12 gh^2, huv]^T$ and $G=[hv, huv, hv^2+\frac12 gh^2]^T$ are
flux vectors, and $s=[0, ghb_x, ghb_y]^T$ is the source vector.
It is well-known that solutions of the balance law \eqref{bal} can develop shock
discontinuities in a finite time, independent of whether the initial data is
smooth or not. Hence, the solution of \eqref{swe} are considered in the
weak sense \cite{D16}, which are in general not unique. 

\textbf{(i) The entropy condition}.
To address the issue of non-uniqueness for weak solutions,
an additional admissibility criterion based on the so-called {\it entropy
condition} is imposed.
For the SWEs, the total energy 
\[
  E(U) :=\frac12h(u^2+v^2)+\frac12gh^2+ghb
\] 
serves as an entropy function, which contains the kinetic energy 
$\frac12h(u^2+v^2)$ and the gravitational potential energy 
$\frac12gh^2+ghb$. If the bottom topography $b$ and 
the solution of \eqref{swe} is smooth, 
a straightforward calculation \cite{FMT11} reveals that 
\begin{align}
  \label{entro}
 E(U)_t  
 + \left(
 \frac12(hu^3+huv^2)
 +ghu(h+b)
 \right)_x+\left(
 \frac12(hu^2v+hv^3)
 +ghv(h+b)
 \right)_y = 0,
\end{align}
which is obtained by taking the inner product of 
the SWEs \eqref{swe} with the 
{\it entropy variable} 
\begin{align}
  \label{entro-v}
  V:=\partial_U E = [g(h+b)-\frac12(u^2+v^2), u, v]^T
\end{align}
and applying the chain rule.
Since entropy should be dissipated across shock discontinuities, 
the entropy conservation equation \eqref{entro} needs to be replaced by the following 
entropy dissipation postulate
\begin{align}
  \label{entroD}
 E(U)_t  
 + \left(
 \frac12(hu^3+huv^2)
 +ghu(h+b)
 \right)_x+\left(
 \frac12(hu^2v+hv^3)
 +ghv(h+b)
 \right)_y \le 0,
\end{align}

\textbf{(ii) Steady states and well-balanced property}.
Another important issue which arises in SWEs \eqref{swe} is the simulation of
their steady states, which are solutions that are independent of the time
variables. The most import example of a steady state solution to \eqref{swe}
is the so-called {\it lake at rest}, given by 
\begin{align}
  \label{wb}
u=v\equiv 0, \quad h+b \equiv Const.
\end{align}
Many interesting applications, such as 
waves on a lake or tsunami waves in deep ocean, 
involve computing perturbations of the lake at rest.
A numerical scheme which preserves a discrete version of the steady state 
\eqref{wb}
is termed {\it well-balanced} with respect to the steady state.

\textbf{ (iii) Positivity of the water height}.
The water height $h$ in the SWEs needs to remain positive (non-negative)
for the system \eqref{swe} to remain well-posed.
If the water height  becomes negative, the system \eqref{swe} will be
non-hyperbolic and non-physical, and the problem will be ill-posed.

\textbf{(iv) The conservation property}.
For constant topography $b\equiv Const$, the balance law \eqref{bal}
reduces to a hyperbolic system of conservation laws 
\[
  U_t + F(U)_x + G(U)_y=0.
\]
Integrating the above equation over any control volume $D\subset \mathbb{R}^2$
and applying the Gauss law, there holds  the following conservation property:
\[
  \frac{d}{dt}\int_{D}U\,\mathrm{dx} = -\int_{\partial D}
  [F(U),  G(U)]^T\cdot \bld n\, \mathrm{ds},
\] 
where $\bld n$ is the outward unit normal direction on the boundary 
$\partial D$.

The main focus of this work is to construct high-order numerical schemes for the
SWEs \eqref{swe} on unstructured meshes that respect the above mentioned four
properties.
All these properties are important in practice:
\begin{itemize}
  \item  
The Lax-Wendroff theorem \cite{LW60} ensures that if 
a conservative scheme produces a discrete solution
$U_h(x,t)$ that uniformly converges to $\bar{U}(x,t)$, then ${\bar{U}}(x,t)$
is a weak solution to the continuous equation. Non-conservative schemes may converge to wrong solutions.
\item 
Many shallow water applications involve rapidly moving interfaces between wet
and dry areas, where no water is present. 
If no special attention is paid to maintain the positivity (non-negativity) 
of the water height,
standard numerical methods may produce unacceptable negative water height 
near the dry/wet front, which crashes of the numerical
simulation. 
\item Well-balanced schemes are essential for computing perturbations of steady
  states.
\item Entropy stability \eqref{entroD} provides additional stabilization
  mechanism to the
  scheme which further enhance its robustness. 
\end{itemize}
Various numerical schemes satisfying (part of) these properties 
for hyperbolic conservation laws or balance laws have been proposed in the
literature. We refer to the review articles \cite{XS14, Xing17} for a survey of
numerical schemes for the SWEs, in particular high-order well-balanced and
positivity-preserving schemes; to the review articles \cite{ZS11, XZ17}
for a survey of maximum-principle-satisfying and positivity-preserving
high-order schemes for conservation laws; and to the review article
\cite{Tadmor16} for entropy stable schemes.

Of particular relevance to the current work is the class of 
entropy stable schemes for the SWEs, which respect the entropy dissipation 
postulate
\eqref{entroD}. First-order entropy stable finite volume (FV) schemes for the SWEs were 
proposed in \cite{TadmorZhong08,FMT09, FMT11} where the key concepts of entropy
variable, entropy conservative/stable numerical fluxes were discussed.
Similar entropy conservative/stable numerical fluxes were adopted in the
high-order {\it nodal} DG literature, which,  in combination with 
discrete derivative operators
using Gauss-Lobatto quadrature points that
satisfy the summation-by-parts (SBP) property, 
 yield entropy conservative/stable DG discretizations 
\cite{GWK16, WWAGK17, WWAGW18}. 
High-order entropy stable DG schemes were more recently extended to {\it modal}
formulations \cite{WDGX20, WKC21}, following the work of Chan \cite{Chan18}.
All these works are also well-balanced and conservative.
These entropy stable spatial discretizations were then combined with 
{\it explicit} strong
stability preserving  Runge-Kutta (SSP-RK) time integrators \cite{GST01}
to yield fully discrete conservative and well-balanced schemes. 
We note that, in all these works,  the entropy stability was proven in the
semi-discrete level where only  spatial discretization was involved, which 
does not hold theoretically  for the explicit fully discretizations.
We further note that 
the schemes  \cite{WWAGW18, WDGX20} can preserve the positivity of the
water height with the aid of a positivity preserving scaling limiter \cite{ZS11}.
Moreover, most of the above cited works use structured/rectangular meshes, 
with the exceptions of \cite{WWAGW18} which uses unstructured quadrilateral
meshes and \cite{WDGX20} which works on unstructured triangular meshes.

In this work, we construct high-order locally conservative, positivity
preserving, well-balanced, and entropy stable DG schemes for the SWEs on general
triangular meshes. Our approach is very different from the above cited entropy
stable DG schemes which relies on the SBP property of the underling difference
operators. 
In our semi-discrete scheme,
instead of directly approximating the conservative variables, 
we use the water height and velocity as the solution unknowns.
As a result, entropy stability is achieved naturally within the
weak formulation without the need to convert to the 
strong form or work with difference
operators/matrices. 
Our approach is more closely related to the entropy variable approach
to entropy stable schemes proposed by Tadmor \cite{Tadmor86} back in 1986, as
the velocity is part of the entropy variable \eqref{entro-v}.
For the purpose of efficient explicit time integration, we still keep 
the water height as the solution unknown. As a result, we need to use the skew-symmetric formulation of the momentum equation 
\cite{GWK16} to guarantee entropy stability of the semi-discrete scheme.
Similar to the works \cite{WDGX20, WKC21}, our proposed scheme achieves 
entropy stability regardless of the choice of underlying numerical integration rules,
although the proofs are very different.
Actually, all integrals in our scheme involve polynomials only, which can be easily computed exactly if one wishes.

The proposed DG semi-discretization is then carefully combined with a classical SSP-RK
time integrator \cite{GST01}, in combination with a positivity-preserving
scaling limiter to ensure positivity of the water height. 
Here special attention is paid to the Runge-Kutta inner stage reconstructions to
maintain local conservation of the fully discrete scheme.
To do so, we re-introduce the discharge (momentum) 
as an auxiliary solution unknown and 
reconstruct inner stage values based on the 
the conservative variables, i.e.,
water height and discharge. We prove that this water height-velocity-discharge three-field formulation
is mathematically equivalent to the water height-velocity two-field formulation
in the semi-discrete level.
The advantage of this three-field formulation over the two-field formulation is that 
 standard slope limiting procedures can now be applied on 
the conservative variables (water height and discharge) 
to suppress numerical
 oscillations 
 near discontinuities
without violating the local conservation property. 
Here we apply the characteristic-wise TVB limiter \cite{CS98}  with the 
Fu-Shu troubled cell indicator \cite{FS17} using the total height as the indicating function.

The last ingredient of our fully discrete scheme is a proper wetting/drying treatment for problems with (moving) dry areas. The above mentioned positivity preserving limiter and TVB limiter do not directly work on the velocity approximation. As a result, the scheme may produce arbitrarily large velocity approximations near dry cells where the water height is very small.
Without any special treatment near these regions, the large velocity near dry areas will dictate the time step size, and may even crash the code due to too large velocity values.
We looked into a couple of wetting/drying treatments in the literature, but didn't find a good one yet that works for our scheme. Hence we introduce a new wetting/drying treatment that at least works for our numerical examples; see details in Remark \ref{rk:dry} below.

The rest of the paper is organized as follows. In Section 2, we introduce 
the reformation of SWEs \eqref{swe}, and  used it
to design a conservative, well-balanced and entropy stable DG spatial discretization.
In Section 3, 
we present the explicit temporal discretization, and then prove the 
posivitity preservation property. The implementation of a characteristic-wise 
TVD slope limiter with an efficient troubled cell indicator 
is then discussed. We further remark on the proper wetting/drying treatment in the velocity calculation.
Numerical results in one- and two-dimensions are then reported 
in Section 4. We draw concluding remarks in Section 5.

\section{Reformulation of SWEs and the DG semi-discretization}
In this section, we first reformulate the SWEs \eqref{swe} into an equivalent
skew-symmetric form, c.f. \cite{GWK16}, and then introduce the associated conservative, entropy stable and
well-balanced DG semi-discretization. Although using the same skew-symmetric
form, we emphasis that our entropy stable DG discretization is completely
different from the work \cite{GWK16}, where we use velocity as independent
solution unknowns.

\subsection{The skew-symmetric form of the SWEs}
Multiplying the mass conservation equation \eqref{swe1} by $\frac12\bld u$, and 
subtract it from the momentum balance equation \eqref{swe2}, we get
the following equivalent form of the SWEs:
\begin{subequations}
  \label{sweS}
  \begin{align}
  \label{sweS1}
  h_t +\nabla\cdot(h\bld u)=&\;0,\\
  \label{sweS2}
    (h\bld u)_t+ \nabla\cdot (h\bld u\otimes \bld u)
    +g h\nabla (h+b)
    -\frac12 h_t\bld u-\frac12\nabla\cdot(h\bld u)\bld u
    =&\;0.
\end{align}
\end{subequations}
Here \eqref{sweS2} is referred to as the skew-symmetric form of the momentum
balance equation \eqref{swe2}, c.f. \cite{GWK16}. 
Multiplying \eqref{sweS1} with $g(h+b)$ and \eqref{sweS2} with $\bld u$ and
adding, we immediate get the entropy conservation equality \eqref{entro}.
This suggests to use finite elements to directly approximate the quantities 
$g(h+b)$ and $\bld u$ in order to design a Galerkin method that respect the
entropy conservation property \eqref{entro}, which is the approach we take in this article.
In practice, we use a discontinuous finite element space to directly  approximate
the water height
$h(x,t)$ and use the same finite element space to approximate the 
bottom topography $b(x)$, so that $g(h+b)$ can be taken as a test function in the 
Galerkin formulation.

\subsection{The conservative, entropy-stable, and well-balanced DG spatial
discretization}
Without loss of generality, we formulate the DG spatial discretization for 
the SWEs \eqref{sweS} on a periodic domain $\Omega\subset \mathbb{R}^2$.
Other standard boundary conditions will be used in the numerical experiments.
Here we formulate the scheme on a general unstructured triangular mesh, while
noting that the proposed method works on any standard meshes.

To this end, let $\Omega_h:=\{K\}$ be a conforming triangular discretization of the
domain $\Omega$.  Denote $\partial \Omega_h:=\{\partial K\}$ as the
collection of element boundaries with $\bld n_K$ the associated outward unit
normal direction. Let $\mathcal{E}_h:=\{F\}$ be the collection
of edges of the triangulation $\Omega_h$. For any polynomial degree $k\ge 0$, let 
\begin{align}
  \label{space}
  V_h^k :=\{v\in L^2(\Omega):\quad v|_K\in\pol_k(K), \quad \forall
  K\in\Omega_h\},
\end{align}
where $\pol_k(K)$ is the space of polynomials of degree at most $k$ on the
element $K$.
Furthermore, let $\bld V_h^k$ be the  vectorial version of the space $V_h^k$.
Given an edge $F=K^+\cap K^-\in\mathcal{E}_h$ which is shared by two elements $K^+$
and $K^-$, we denote $\bld n$ as the unit normal direction on $F$ pointing
towards $K^-$, and denote 
$\jmp{\phi}|_F :=\phi^+-\phi^-$ and 
$\avg{\phi}|_F:=\frac12(\phi^++\phi^-)$ as the standard jump and average on $F$ 
for any function $\phi\in V_h^k$, where $\phi^\pm:=\phi|_{K^\pm}$. 

The proposed DG spatial discretization of \eqref{sweS} on the periodic domain
$\Omega$ reads as follows:
find $(h_h,\bld u_h)\in V_h^k\times \bld V_h^k$ such that
\begin{subequations}
  \label{dg}
  \begin{align}
  \label{dg1}
    M_h\left((h_h)_t, e_h\right) + A_h((h_h, \bld u_h), e_h) &\;= 0,\\
  \label{dg2}
    \bld M_h\left((h_h\bld u_h)_t, \bld v_h\right) 
    + B_h((h_h, \bld u_h), \bld v_h) 
    +C_h((h_h, \bld u_h), \bld v_h) \;\;&\;\nonumber\\
-
    M_h\left((h_h)_t, \frac12\bld u_h\cdot\bld v_h\right) - A_h\left((h_h, \bld u_h), \frac12\bld u_h\cdot\bld v_h\right) &\;= 0,
  \end{align}
for all $(e_h, \bld v_h)\in V_h^k\times \bld V_h^k$,
where 
$b_h\in V_h^k$ is a proper approximation of the bottom topography, and
the associated operators are given below:
\begin{align}
\label{op-1}
  M_h((h_h)_t, e_h):=   & \sum_{K\in\Omega_h}\int_K(h_h)_te_h\,\mathrm{dx},\\
  A_h((h_h, \bld u_h), e_h):=  & 
  \sum_{K\in\Omega_h}\left(-\int_Kh_h\bld u_h\cdot\nabla e_h\,\mathrm{dx}+
  \int_{\partial K}\widehat{h_h\bld u_h}\cdot \bld
n_Ke_h\,\mathrm{ds}\right),\nonumber\\
\label{op-2}
= &-\sum_{K\in\Omega_h}\int_Kh_h\bld u_h\cdot\nabla e_h\,\mathrm{dx}+
    \sum_{F\in\mathcal{E}_h}\int_F\widehat{h_h\bld u_h}\cdot \bld
    n\jmp{e_h}\,\mathrm{ds} ,\\
  \label{op-5}
    \bld M_h\Big((h_h\bld u_h)_t,\bld v_h\Big) 
:=  &\;
    \sum_{K\in\Omega_h}\int_K(h_h\bld u_h)_t
    \cdot\bld v_h\,\mathrm{dx},\\    
  B_h((h_h, \bld u_h), \bld v_h):=  & 
    -\sum_{K\in\Omega_h}\int_K
    h_h(\bld u_h\otimes\bld u_h):\nabla \bld v_h 
\,\mathrm{dx}+
\sum_{F\in\mathcal{E}_h}
\int_F
\widehat{(h_h\bld u_h\bld u_h)}\bld n
\cdot \jmp{\bld v_h}\,\mathrm{ds}, \label{op-3}\\
    C_h((h_h, \bld u_h), \bld v_h) := &\;
    \sum_{K\in\Omega_h}\int_Kgh_h\nabla(h_h+b_h)\cdot\bld v_h
\,\mathrm{dx}
-
\sum_{F\in\mathcal{E}_h}\int_Fg\jmp{h_h+b_h}\avg{h_h\bld v_h}\cdot\bld n
\,\mathrm{dx},
\label{op-4}
\end{align} 
where the numerical fluxes $\widehat{h_h\bld u_h}\cdot\bld n$
and $\widehat{(h_h\bld u_h\bld u_h)}\bld n$
in the operators \eqref{op-2} and
\eqref{op-3} are defined as follows:
\begin{align}
  \label{flux}
\widehat{h_h\bld u_h}\cdot\bld n:= &
\avg{h_h\bld u_h}\cdot\bld n + \frac12\alpha_h \jmp{h_h+b_h},\\
  \label{flux2}
  \widehat{(h_h\bld u_h\bld u_h)}\bld n:= &
\avg{h_h\bld u_h}\cdot\bld n\,\avg{\bld u_h} + 
\frac12\alpha_h \jmp{(h_h+b_h)\bld u_h},
\end{align}
with estimated maximum speed 
\begin{align}
  \label{speed}
  \alpha_h|_F :=\max\left\{\sqrt{gh_h^+}+|\bld u_h^+\cdot \bld n|,
  \sqrt{gh_h^-}+|\bld u_h^-\cdot \bld n|\right\}.
\end{align}
We will show below that these local Lax-Friedrichs type 
numerical fluxes are entropy-stable.
We note that the above operators are very natural  DG scretizations of  
the corresponding PDE operators in \eqref{sweS}, in particular,
\begin{itemize}
  \item The operators $M_h$ in \eqref{op-1} and $\bld M_h$ in \eqref{op-5} are the weak forms associated with  
    the time derivative term $h_t$ in \eqref{sweS1}, and 
    $
    (h_h\bld u_h)_t $ in \eqref{sweS2}, respectively;
  \item The operator $A_h$ in \eqref{op-2}
    is the DG discretization of the convection term 
    $\nabla\cdot(h\bld u)$    in 
    \eqref{sweS1}, with the numerical flux \eqref{flux}, and 
     The operator $B_h$ in \eqref{op-3} is the DG discretization of the convection term $\nabla\cdot(h\bld u\otimes\bld u)$
 in \eqref{sweS2}, with the numerical flux \eqref{flux2}. 
 Here the particular choice of the numerical fluxes \eqref{flux} and \eqref{flux2}  is crucial for the {\it entropy stability} of the semi-discrete scheme \eqref{dg}. Similar numerical fluxes have been used in the literature, c.f. \cite{FMT11,GWK16};
\item The operators $M_h$ and $A_h$ in \eqref{dg2} are the operators associated with the  
skew-symmetric terms $\frac12 h_t\bld u$ and $\frac12\nabla\cdot(h\bld u)\bld u$ in \eqref{sweS2}, respectively;
\item The operator $C_h$ in \eqref{op-4} is a DG discretization of 
  the 
gravitational
  term $gh\nabla(h+b)$ in \eqref{sweS2} using a central numerical flux.
  To see this, we note that the DG discretization with central numerical flux
  for this operator reads as follows:
  \begin{align*}
    - \sum_{K\in\Omega_h}\int_Kg(h_h+b_h)\nabla\cdot(h_h\bld v_h)
\,\mathrm{dx}
+
\sum_{F\in\mathcal{E}_h}\int_Fg\avg{h_h+b_h}\jmp{h_h\bld v_h}\cdot\bld n
\,\mathrm{dx},
  \end{align*}
which is equivalent to $C_h$ by integration by parts.
We mention that the two gravitational terms in momentum balance \eqref{swe2} 
are combined into a single non-conservative product, which is another key to 
the entropy stability and well-balanced property of our scheme.
We note that such  non-conservative  product has been explored in the literature, e.g., 
\cite{RBV08}, to design
well-balanced DG schemes.
\end{itemize}
\end{subequations}

We conclude this section with the main properties of our proposed DG
discretization \eqref{dg}, namely, local conservation, entropy stability and
the well-balanced property.
\begin{theorem}
\label{thm1}
  The semi-discrete scheme \eqref{dg} is 
\begin{subequations}
  \label{semi-p} 
  \begin{itemize}
    \item locally conservative in the sense of the following equalities:
    \begin{align}
      \label{e1}
\frac{d}{dt} \int_K h_h\,\mathrm{dx} = &\; -\int_{\partial K}\widehat{h_h\bld u_h}\cdot\bld n_K\,\mathrm{ds}, \\
      \label{e2}
\frac{d}{dt} \int_K h_h\bld u_h\,\mathrm{dx} = &\; 
-\int_{\partial K}  \widehat{(h_h\bld u_h\bld u_h)}\bld n_K\,\mathrm{ds}\\
&\;-
\int_{K}  gh_h\nabla(b_h)\,\mathrm{dx}
+\int_{\partial K}  \frac12gh_hb_h\bld n_K\,\mathrm{ds}\nonumber
    \end{align}
  \item entropy stable in the sense of the following equality:
\begin{align}
  \label{e3}
  \frac{d}{dt}E_h = &\; 
  -\sum_{F\in\mathcal{E}_h}\int_{F}\frac12\alpha_h
  \left(g\jmp{h_h+b_h}^2+\avg{h_h+b_h}\jmp{\bld u_h}\cdot\jmp{\bld u_h}\right)\,\mathrm{ds}
  \le 0 ,
\end{align}  
where the discrete entropy (total energy) is  
\[
  E_h:= \sum_{K\in\Omega_h} \int_K(\frac12\bld h_h|\bld
  u_h|^2+\frac12gh^2+ghb)\mathrm{dx},
\] 
\item well-balanced in the sense that it preserve the lake-at-rest steady state:
if the initial condition satisfies
\begin{align}
\label{eX}
  \bld u_h(0) = 0, \text{and}\quad  h_h(0)+b_h = C,
\end{align}
the solution to the semi-discrete scheme \eqref{dg} satisfies
\begin{align}
  \label{e4}
  \bld u_h(t) = 0, \text{and}\quad h_h(t)+b_h = C.
\end{align}
\end{itemize}
\end{subequations}
\end{theorem}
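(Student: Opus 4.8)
The plan is to obtain all three properties by inserting carefully chosen test functions into the weak form \eqref{dg1}--\eqref{dg2} and reproducing, at the discrete level, the same manipulations that give the continuous conservation laws \eqref{bal} and the energy balance \eqref{entro}. For \eqref{e1} I would take $e_h=\mathbf 1_K$, the indicator of the single element $K$, which belongs to $V_h^k$ for every $k\ge0$; in \eqref{dg1} the volume term of $A_h$ vanishes because $\nabla e_h\equiv0$ on $K$, and reading $A_h$ in its element-boundary form \eqref{op-2} leaves only the face integrals over $\partial K$, which is exactly \eqref{e1}. For \eqref{e2} I would take $\bld v_h=\bld c\,\mathbf 1_K$ with $\bld c$ an arbitrary constant vector. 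The decisive point is that $\tfrac12\bld u_h\cdot\bld v_h=\tfrac12(\bld u_h\cdot\bld c)\,\mathbf 1_K$ is itself an admissible scalar test function in $V_h^k$, so by \eqref{dg1} the two skew-symmetric correction terms in \eqref{dg2} add to zero, leaving $\bld M_h((h_h\bld u_h)_t,\bld c\,\mathbf 1_K)+B_h((h_h,\bld u_h),\bld c\,\mathbf 1_K)+C_h((h_h,\bld u_h),\bld c\,\mathbf 1_K)=0$. Again $\nabla\bld v_h\equiv0$ on $K$ kills the $B_h$ volume term and leaves the advective flux over $\partial K$; for $C_h$ I would use $gh_h\nabla h_h=\tfrac12 g\nabla(h_h^2)$, the divergence theorem on $K$, and the central flux of \eqref{op-4} to recast the pressure-like part as a boundary flux and isolate the bottom-source volume term $gh_h\nabla b_h$. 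Since $\bld c$ is arbitrary, \eqref{e2} follows.

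For entropy stability, the continuous recipe (multiply \eqref{sweS1} by $g(h+b)$, multiply \eqref{sweS2} by $\bld u$, and add) translates directly: I would test \eqref{dg1} with $e_h=g(h_h+b_h)\in V_h^k$ and \eqref{dg2} with $\bld v_h=\bld u_h\in\bld V_h^k$, then add. From the first equation, since $b_h$ is time-independent, $\tfrac{d}{dt}\int_\Omega(\tfrac12 gh_h^2+gh_hb_h)=-A_h((h_h,\bld u_h),g(h_h+b_h))$; from the second, the product rule gives $\bld M_h((h_h\bld u_h)_t,\bld u_h)-M_h((h_h)_t,\tfrac12|\bld u_h|^2)=\tfrac{d}{dt}\int_\Omega\tfrac12 h_h|\bld u_h|^2$, so after adding,
\[
\tfrac{d}{dt}E_h = A_h\big((h_h,\bld u_h),\tfrac12|\bld u_h|^2\big)-A_h\big((h_h,\bld u_h),g(h_h+b_h)\big)-B_h\big((h_h,\bld u_h),\bld u_h\big)-C_h\big((h_h,\bld u_h),\bld u_h\big).
\]
Now I would expand the four operators. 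All volume integrals cancel in pairs: the $B_h$ volume term against that of $A_h(\cdot,\tfrac12|\bld u_h|^2)$ via the chain rule $\bld u\cdot\nabla(\tfrac12|\bld u|^2)=(\bld u\otimes\bld u):\nabla\bld u$, and the $C_h$ volume term against that of $A_h(\cdot,g(h_h+b_h))$ since both equal $\pm\int_K gh_h\bld u_h\cdot\nabla(h_h+b_h)$. The remaining face integrals are handled by substituting the fluxes \eqref{flux}--\eqref{flux2} and using the jump identities $\jmp{fg}=\avg{f}\jmp{g}+\jmp{f}\avg{g}$ and $\jmp{\tfrac12|\bld u_h|^2}=\avg{\bld u_h}\cdot\jmp{\bld u_h}$: the parts proportional to the central flux $\avg{h_h\bld u_h}\cdot\bld n$ cancel identically (the discrete entropy-conservation part), and the local Lax--Friedrichs parts collapse to $-\tfrac12\alpha_h\big(g\jmp{h_h+b_h}^2+\avg{h_h+b_h}|\jmp{\bld u_h}|^2\big)$, which is the identity in \eqref{e3}. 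Since $\alpha_h\ge0$ by \eqref{speed} and $\avg{h_h+b_h}\ge0$ whenever the numerical free surface is non-negative (one may assume $b\ge0$, a constant shift of $b$ leaving both \eqref{dg} and $\tfrac{d}{dt}E_h$ unchanged), the right-hand side is $\le0$.

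For well-balancedness, I would verify that the constant-in-time pair $h_h(t)\equiv h_h(0)$, $\bld u_h(t)\equiv\bld 0$ with $h_h(0)+b_h=C$ solves \eqref{dg}, and then use uniqueness. At such a state $h_h\bld u_h=\bld 0$, $\jmp{h_h+b_h}=\jmp{C}=0$, and $\nabla(h_h+b_h)=\nabla C=\bld 0$, so both numerical fluxes \eqref{flux}--\eqref{flux2} vanish, $A_h=B_h\equiv0$, and $C_h\equiv0$ because its volume integrand carries $\nabla(h_h+b_h)$ and its face integrand carries $\jmp{h_h+b_h}$. Then \eqref{dg1} forces $(h_h)_t=0$, and \eqref{dg2} reduces to $\int_K h_h(\bld u_h)_t\cdot\bld v_h=0$ for all $\bld v_h$, i.e.\ $(\bld u_h)_t=\bld 0$ since the $h_h$-weighted mass matrix is invertible for $h_h>0$. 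Hence the pair is stationary; as the semi-discrete scheme is, after inverting the mass matrices, an ODE system with locally Lipschitz right-hand side, this is its unique solution, which is \eqref{e4}.

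The step I expect to carry the weight is the cancellation bookkeeping in the entropy estimate: one must track every volume and face contribution and see that the non-conservative gravitational product in $C_h$ and the skew-symmetric correction in \eqref{dg2} are precisely what make the central-flux parts cancel, so that only the sign-definite Lax--Friedrichs dissipation survives. By comparison the local-conservation proof hinges only on the short observation that $\tfrac12\bld u_h\cdot\bld v_h$ is an admissible test function whenever $\bld v_h$ is, and well-balancedness is essentially a direct substitution.
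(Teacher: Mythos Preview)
Your proposal is correct and follows essentially the same strategy as the paper's proof: for \eqref{e1}--\eqref{e2} you use the characteristic functions $\mathbf 1_K$ and $\bld c\,\mathbf 1_K$ as test functions (the paper does this component by component with $\bld c=\bld e_1,\bld e_2$), observing that the skew-symmetric terms vanish by feeding $\tfrac12\bld u_h\cdot\bld v_h$ back into \eqref{dg1}; for \eqref{e3} you test with $e_h=g(h_h+b_h)$ and $\bld v_h=\bld u_h$ and carry out the same volume/face cancellation bookkeeping, arriving at the identical dissipation term; and for \eqref{e4} you check that the lake-at-rest state annihilates $A_h,B_h,C_h$. Your addition of an explicit ODE uniqueness argument for well-balancedness and the remark on the sign of $\avg{h_h+b_h}$ in the entropy inequality are points the paper leaves implicit, but the overall route is the same.
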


\begin{proof}
Taking test function $e_h:=1$ on element $K$ and {\it zero} elsewhere in \eqref{dg1}, we get mass conservation in \eqref{e1}.
Denote $u_h$ and $v_h$ as the two components of the velocity approximation $\bld u_h$.
Taking test function $e_h:=\frac12u_h$ on element $K$ and {\it zero} elsewhere in \eqref{dg1}, and $\bld v_h:=(1,0)$ on element $K$ and {\it zero} elsewhere in \eqref{dg2}
and adding, we get the following:
\begin{align*}
\frac{d}{dt} \int_K h_hu_h\,\mathrm{dx}   =
-\int_{\partial K}  \widehat{(h_h\bld u_h\bld u_h)}n_x\,\mathrm{ds}-
\int_{K}  gh_h\partial_x(h_h+b_h)\,\mathrm{dx}
+\int_{\partial K}  \frac12gh_h(h_h+b_h) n_x\,\mathrm{ds},
\end{align*}
where $n_x$ is the first component of the normal direction $\bld n_K$.
Combining the above identity with the fact that 
\begin{align*}
\int_{K}  gh_h\partial_x(h_h)\,\mathrm{dx}
-\int_{\partial K} \frac12 gh_h^2 n_x\,\mathrm{ds}
=0,
\end{align*}
we get the first component of the momentum balance identity \eqref{e2}.
We can apply the same argument to obtain the second component of the momentum balance identity \eqref{e2}.

Taking test function $e_h=g(h_h+b_h)$ in \eqref{dg1} and 
$\bld v_h=\bld u_h$ in \eqref{dg2} and adding, we get
\begin{align*}
&\;    M_h((h_h)_t, g(h_h+b_h))
    +\bld M_h((h_h\bld u_h)_t, \bld u_h)
    -\frac12M_h((h_h)_t, \bld u_h\cdot\bld u_h)\\
=&\; \underbrace{-B_h((h_h, \bld u_h), \bld u_h)
    + A_h\Big((h_h, \bld u_h), \frac12\bld u_h\cdot\bld u_h\Big)}_{:=I_1} \underbrace{-C_h((h_h, \bld u_h), \bld u_h) -
   A_h\Big((h_h, \bld u_h), g(h_h+b_h)\Big)}_{:=I_2}.
\end{align*}
Simplifying the above equality, we  yield the entropy dissipation equality \eqref{e3}.
More specifically, it is easy to show that the left hand side of the above equality  is 
the entropy dissipation rate
$\frac{d}{dt}E_h$, and the first term in the above right hand side
\begin{align}
\label{i1}
    I_1 = -\sum_{F\in\mathcal{E}_h}\int_{F}\frac12\alpha_h\avg{h_h+b_h}\jmp{\bld u_h}\cdot\jmp{\bld u_h}\,\mathrm{ds},
\end{align}
and the second term
\begin{align*}
    I_2 = -\sum_{F\in\mathcal{E}_h}\int_{F}\frac12\alpha_hg\jmp{h_h+b_h}^2\,\mathrm{ds},
\end{align*}
Below we only give detailed proof of the identity for $I_1$.
We have 
\begin{align*}
I_1 = &\;    -B_h((h_h, \bld u_h), \bld u_h)
    + A_h\Big((h_h, \bld u_h), \frac12\bld u_h\cdot\bld u_h\Big)\\
    = &\;   
    \underbrace{ \sum_{K\in\Omega_h}\int_K
  \left(  h_h(\bld u_h\otimes\bld u_h):\nabla \bld u_h 
  -\frac12 h_h\bld u_h\cdot \nabla (\bld u_h\cdot\bld u_h)
\right)\,\mathrm{dx}}_{\equiv 0} \\
&\;-\sum_{F\in\mathcal{E}_h}
\int_F
\left(\widehat{(h_h\bld u_h\bld u_h)}\bld n
\cdot \jmp{\bld u_h}
-\widehat{h_h\bld u_h}\cdot\bld n
\jmp{\frac12\bld u_h\cdot\bld u_h}
\right)
\,\mathrm{ds}.
\end{align*}
Now by the definition of the numerical fluxes in \eqref{flux}--\eqref{flux2} and the 
simple fact that $\jmp{ab} = \avg{a}\jmp{b}+\jmp{a}\avg{b}$, we have 
\[
\widehat{(h_h\bld u_h\bld u_h)}\bld n
\cdot \jmp{\bld u_h}
-\widehat{h_h\bld u_h}\cdot\bld n
\jmp{\frac12\bld u_h\cdot\bld u_h} = \frac12\alpha_h\avg{h_h+b_h}\jmp{\bld u_h}\cdot\jmp{\bld u_h},
\]
which proves the identity for $I_1$ in \eqref{i1}.

Finally, under the assumption \eqref{eX}, it is trivial to show that the spatial operators $A_h, B_h, C_h$ in \eqref{dg} all stays {\it zero}.
Hence, $(h_h)_t \equiv 0$ from equation \eqref{dg1}, and 
$(h_h\bld u_h)_t = \frac12(h_h)_t\bld u_h \equiv 0$ from \eqref{dg2}.
This implies the well-balanced property \eqref{e4}.
\end{proof}

\begin{remark}[Comparison with other entropy stable DG schemes]
\label{rk:es}
Our first order scheme with polynomial degree $k=0$ is closely related to the first order finite volume entropy stable schemes \cite{FMT09, FMT11}, as both approaches use the concept of entropy conservative/stable fluxes, cf. \eqref{flux}--\eqref{flux2}. The work \cite{FMT09,FMT11} promote to use a Roe-type dissipation operator in the numerical flux, while our numerical dissipation is of the more dissipative Lax-Friedrichs type. 

For our high-order DG scheme with $k\ge 1$, we were not able to find similar work in the literature. There are two main approaches to construct high-order entropy stable schemes for nonlinear conservation laws, both stem from Tadmor's pioneer work on entropy variables and entropy conservative/stable fluxes \cite{Tadmor87, Tadmor86}. 
The first approach directly discretizes the conservation equations using the entropy variables \cite{Tadmor86}, see also \cite{Hughes86}. The major drawback of this approach is that explicit time stepping is usually not applicable to these schemes due to the highly nonlinear mapping between the entropy variables and the conservative variables.
Hence they are generally more expensive than other explicit schemes.
The second approach is based on the (quadrature-based) SBP operator concept, and has undergo a major development in the past few years, see, e.g., the entropy stable DG schemes \cite{GWK16, WWAGK17, WWAGW18,WDGX20, WKC21} for SWEs. These quadrature-based approaches may lead to accuracy loss, and they may be more cumbersome to implement on unstructured triangular meshes than classical DG schemes; see the more discussion in the recent review work \cite{CS20}.

Our scheme \eqref{dg} combines both advantages of the above mentioned approaches: 
\begin{itemize}
    \item the proof of entropy stability can be performed directly on the variational formation \eqref{dg} without converting to any matrix-vector form. 
This is made possible due to the use of velocity approximation and
the skew-symmetrization of the momentum balance equation. 
In particular, we have the entropy stability result \eqref{e3} for the DG scheme \eqref{dg} with any choice of numerical quadrature rule. 
This is possible because the proof of \eqref{e3} does not rely on integration by parts as the operators have already been properly skew-symmetrized.
Hence, the quadrature rule can be chosen only for accuracy considerations. In our numerical experiments, we simply use Gauss quadrature rules that are exact for integrating polynomials of degree $2k$. 
\item 
the semi-discrete scheme \eqref{dg} can be discretized in time using classical explicit time stepping schemes. This is possible because we discretize the water height $h_h$ as the solution unknown.
\end{itemize}
\end{remark}

\section{Fully discrete scheme: local conservation, well-balanced property,
positivity preservation, and slope limiting}
In this section, we discrete the semi-discrete scheme \eqref{dg} in time using  
explicit SSP-RK time integrators.
Special attention is paid to maintain the local conservation, well-balanceness,
and positivity preservation properties. 
We also discuss the use of a characteristic-wise TVB slope limiter
\cite{CS98} in combination with the recent troubled-cell indicator proposed in
\cite{FS17} to improve its efficiency. The slope limiter, which suppress
numerical oscillations near shock discontinuities, is a crucial component
for the accuracy and robustness of the overall scheme for polynomial degree $k\ge 1$, 
c.f. \cite{CS01}.
Moreover, we propose a simple wetting/drying treatment for the velocity calculation near dry cells where water height is small.

\subsection{A three-field reformulation of the semi-discrete scheme \eqref{dg}}
Here we introduce a three-field reformation of the semi-discrete DG scheme \eqref{dg} by using the discharge $\bld m:= h\bld u$ as an additional independent unknown, which is then discretized in time using the explicit SSP-RK method.
The three-field DG scheme reads as follows:
find $(h_h, \bld u_h, \bld m_h)\in V_h^k\times\bld V_h^k\times\bld V_h^k$ such that
\begin{subequations}
\label{dgr}  
  \begin{align}
      \label{dgr1}
    M_h\left((h_h)_t, e_h\right) + A_h((h_h, \bld u_h), e_h) = &\;0,\\
    \label{dgr2}
    \bld M_h\left((\bld m_h)_t, \bld v_h\right) 
    + B_h((h_h, \bld u_h), \bld v_h) 
    +C_h((h_h, \bld u_h), \bld v_h) \;\;\quad&\\
        -M_h\left((h_h)_t, \frac12\bld u_h\cdot\bld v_h\right) - A_h\left((h_h, \bld u_h), \frac12\bld u_h\cdot\bld v_h\right) = &\;0,\\
    \label{dgr3}
    \bld M_h\left(h_h\bld u_h-\bld m_h, \bld w_h\right) = &\;0,
  \end{align}
\end{subequations}
for all $(e_h, \bld v_h,\bld w_h)\in V_h^k\times\bld V_h^k\times \bld V_h^k$.
We have the following equivalence of the two formulations \eqref{dg} and \eqref{dgr}.
\begin{theorem}
Let $(h_h, \bld u_h, \bld m_h)$ be the solution to the three-field DG formulation \eqref{dgr}.
Then $(h_h, \bld u_h)$ is the solution to the two-field DG formulation \eqref{dg}.
\end{theorem}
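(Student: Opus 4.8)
The plan is to show that the three-field system \eqref{dgr} collapses onto the two-field system \eqref{dg} by eliminating the auxiliary discharge variable $\bld m_h$. The key observation is that \eqref{dgr3}, being required to hold for all test functions $\bld w_h\in\bld V_h^k$, is nothing but the statement that $\bld m_h$ is the $L^2$-projection of $h_h\bld u_h$ onto $\bld V_h^k$; since $h_h\in V_h^k$ and $\bld u_h\in\bld V_h^k$, the product $h_h\bld u_h$ is a piecewise polynomial of degree at most $2k$, so in general $\bld m_h\neq h_h\bld u_h$ pointwise, but they have the same $L^2$-inner product against every element of $\bld V_h^k$. I would state this as the first step.

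The second step is to compare the time-derivative terms. In \eqref{dg2} the term is $\bld M_h((h_h\bld u_h)_t,\bld v_h)$, whereas in \eqref{dgr2} it is $\bld M_h((\bld m_h)_t,\bld v_h)$. Differentiating the projection identity \eqref{dgr3} in time — which is legitimate since $\bld M_h$ is a fixed bilinear form and the relation holds for all fixed $\bld w_h$ — gives $\bld M_h((\bld m_h)_t,\bld w_h)=\bld M_h((h_h\bld u_h)_t,\bld w_h)$ for all $\bld w_h\in\bld V_h^k$. Taking $\bld w_h=\bld v_h$ shows the two time-derivative terms agree for every admissible test function in \eqref{dg2}/\eqref{dgr2}. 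All the remaining operators $A_h$, $B_h$, $C_h$ and the skew-symmetric correction terms in \eqref{dgr1}--\eqref{dgr2} depend only on $(h_h,\bld u_h)$ and are literally identical to those in \eqref{dg1}--\eqref{dg2}; likewise \eqref{dgr1} is identical to \eqref{dg1}. Hence, substituting, the pair $(h_h,\bld u_h)$ satisfies \eqref{dg1}--\eqref{dg2} for all $(e_h,\bld v_h)$, which is the claim.

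A minor point I would address carefully is well-posedness / consistency of the elimination: one should note that \eqref{dgr3} determines $\bld m_h$ uniquely given $(h_h,\bld u_h)$ (the mass matrix $\bld M_h$ is positive definite on $\bld V_h^k$), so there is no ambiguity in speaking of ``the'' solution, and conversely given a solution $(h_h,\bld u_h)$ of \eqref{dg} one may define $\bld m_h$ by \eqref{dgr3} to recover a solution of \eqref{dgr}, giving a genuine equivalence — though the theorem as stated only asks for one direction. I would also remark that the initial data for $\bld m_h$ must be taken consistently, namely $\bld m_h(0)$ equal to the $L^2$-projection of $h_h(0)\bld u_h(0)$, for the time-differentiated identity to propagate the equivalence from $t=0$ onward.

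I do not expect a serious obstacle here; the proof is essentially a one-line observation (differentiate the projection constraint in time and substitute). The only place requiring a modicum of care is making explicit that differentiating \eqref{dgr3} in $t$ is valid and that the resulting identity may be tested against the same space $\bld V_h^k$ that appears in \eqref{dg2}, so that the substitution is exact rather than approximate; this is immediate because $\bld V_h^k$ is time-independent and finite-dimensional.
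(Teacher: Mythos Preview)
Your proposal is correct and takes essentially the same approach as the paper: differentiate the constraint \eqref{dgr3} in time to obtain $\bld M_h((\bld m_h)_t,\bld v_h)=\bld M_h((h_h\bld u_h)_t,\bld v_h)$ for all $\bld v_h\in\bld V_h^k$, substitute into \eqref{dgr2}, and observe that all remaining terms already coincide with \eqref{dg}. The paper's proof is in fact just this one-line observation, so your additional remarks on well-posedness, the converse direction, and consistency of initial data go beyond what the paper records but are all sound.
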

\begin{proof}
Taking the time derivative of  equation \eqref{dgr3}, we can replace the auxiliary variable $\bld m_h$ in \eqref{dgr2} by $h_h\bld u_h$. This means the solution $(h_h,\bld u_h)$ solves the system \eqref{dg}.
\end{proof}
The advantage of this reformulation will be clear next when we discuss SSP-RK time discretizations and slope limiting.

\subsection{High order SSP-RK discretization and 
inner stage reconstruction}
The semi-discrete scheme \eqref{dgr} is not a standard ODE system  
$U_t + \mathcal{F}(U) = 0$, with $U$ being the solution vector and $\mathcal{F}(U)$ the spatial operator, as the time derivative terms in \eqref{dgr2}  involve the nonlinear product $h_h\bld u_h$, and \eqref{dgr3} is an algebraic equation. As a result, special care is need in design locally conservative high-order time discretizations. Here we apply the third order SSP-RK3 scheme to \eqref{dgr} which preserves the local conservation property. It is built on top of a plain forward Euler discretization in Algorithm \ref{alg1}, 
 a velocity update in Algorithm \ref{algV}, and
a convex combination step in Algorithm \ref{alg2}. The full plain SSP-RK3 algorithm without slope limiting is given in Algorithm \ref{alg3}.

\newcommand{\ALOOP}[1]{\ALC@it\algorithmicloop\ #1%
  \begin{ALC@loop}}
\newcommand{\ENDALOOP}{\end{ALC@loop}\ALC@it\algorithmicendloop}
\renewcommand{\algorithmicrequire}{\textbf{Input:}}
\renewcommand{\algorithmicensure}{\textbf{Output:}}
\newcommand{\algorithmicbreak}{\textbf{break}}
\newcommand{\BREAK}{\STATE \algorithmicbreak}
\begin{algorithm}
\caption{Plain Forward Euler + DG}
\begin{algorithmic}[1]
\REQUIRE $h_h^{old}\in V_h^k$, $\bld u_h^{old}, \bld m_h^{old}\in \bld V_h^k$, and time step size $\Delta t>0$.
\ENSURE $h_h^{new}\in V_h^k$, and $\bld m_h^{new}\in \bld V_h^k$.
  \STATE  Compute $h_h^{new}$ by the following equation:
  \[
  M_h(h_h^{new}, e_h) = M_h(h_h^{old}, e_h) - \Delta tA_h((h_h^{old}, \bld u_h^{old}), e_h),\quad \forall 
  e_h\in V_h^k,
  \]
  \STATE Compute $\bld m_h^{new}$ by the following equation:
  \begin{align*}
  \bld M_h(\bld m_h^{new}, \bld v_h) =&\; \bld M_h(\bld m_h^{old}, \bld v_h) - \Delta tB_h((h_h^{old}, \bld u_h^{old}), \bld v_h)
  - \Delta tC_h((h_h^{old}, \bld u_h^{old}), \bld v_h)\\
&\hspace{-1cm}  +\frac12M_h\left(h_h^{new}-h_h^{old}, \bld u_h^{old}\cdot\bld v_h\right) +\frac12\Delta tA_h\left((h_h^{old}, \bld u_h^{old}), \bld u_h^{old}\cdot\bld v_h\right),\quad \forall 
  \bld v_h\in \bld V_h^k,
  \end{align*}
\end{algorithmic}
\label{alg1}
\end{algorithm}

\begin{algorithm}
\caption{Velocity update}
\begin{algorithmic}[1]
\REQUIRE $h_h\in V_h^k$, $\bld m_h\in \bld V_h^k$.
\ENSURE $\bld u_h\in \bld V_h^k$.
\STATE Compute $\bld u_h$ by the following equation:
\[
 \bld M_h(h_h\bld u_h, \bld w_h) =\bld M_h(\bld m_h, \bld w_h),\quad \forall 
  \bld w_h\in \bld V_h^k.
  \]
\end{algorithmic}
\label{algV}
\end{algorithm}

\begin{algorithm}
\caption{Convex combination}
\begin{algorithmic}[1]
\REQUIRE Data $h_h^{1}, h_h^{2}\in V_h^k$, $\bld m_h^{1}, \bld m_h^{2}\in \bld V_h^k$. Positive weights $w_1, w_2$ with $w_1+w_2=1$.
\ENSURE $h_h\in V_h^k$, and $\bld m_h\in \bld V_h^k$.
  \STATE  Compute $h_h$ and $\bld m_h$ using convex combination:
  \[
h_h\leftarrow w_1h_h^1+w_2h_h^2, \quad
\bld m_h\leftarrow w_1\bld m_h^1+w_2\bld m_h^2.
  \]
\end{algorithmic}
\label{alg2}
\end{algorithm}

\begin{algorithm}
\caption{Plain SSP-RK3 + DG}
\begin{algorithmic}[1]
\REQUIRE $h_h^{n}\in V_h^k$, $\bld u_h^{n}, \bld m_h^{n}\in \bld V_h^k$ at time level $t^n$, and time step size $\Delta t>0$.
\ENSURE $h_h^{n+1}\in V_h^k$, and $\bld u_h^{n+1}, \bld m_h^{n+1}\in \bld V_h^k$ at next time level $t^{n+1}:=t^n+\Delta t$.
  \STATE  Apply Algorithm \ref{alg1} with inputs $h_h^n, \bld u_h^n, \bld m_h^n$ and $\Delta t$. 
  Denote the outputs as $h_h^{(1)}, \bld m_h^{(1)}$.
\STATE  Apply Algorithm \ref{algV} with inputs $h_h^{(1)}, \bld m_h^{(1)}$.
Denote the velocity output as $\bld u_h^{(1)}$. 
\STATE Apply Algorithm \ref{alg1} with inputs $h_h^{(1)}, \bld u_h^{(1)}, \bld m_h^{(1)}$ and $\Delta t$. Denote outputs as $h_h^{(2*)}, \bld m_h^{(2*)}$.
  \STATE 
  Apply Algorithm \ref{alg2} with inputs $h_h^{n}, h_h^{(2*)}, \bld m_h^{n}, \bld m_h^{(2*)}$ and weights $w_1=0.75, w_2 = 0.25$.
  Denote outputs as  $h_h^{(2)}, \bld m_h^{(2)}$.
 \STATE  Apply Algorithm \ref{algV} with inputs $h_h^{(2)}, \bld m_h^{(2)}$.
Denote the velocity output as $\bld u_h^{(2)}$. 
\STATE Apply Algorithm \ref{alg1} with inputs $h_h^{(2)}, \bld u_h^{(2)}, \bld m_h^{(2)}$ and $\Delta t$. Denote outputs as $h_h^{(3*)}, \bld m_h^{(3*)}$.  
 \STATE 
  Apply Algorithm \ref{alg2} with inputs $h_h^{n}, h_h^{(3*)}, \bld m_h^{n}, \bld m_h^{(3*)}$ and weights $w_1=1/3, w_2 = 2/3$.
  Denote outputs as  $h_h^{n+1}, \bld m_h^{n+1}$.
   \STATE  Apply Algorithm \ref{algV} with inputs $h_h^{n+1}, \bld m_h^{n+1}$.
Denote the velocity output as $\bld u_h^{n+1}$. 
\end{algorithmic}
\label{alg3}
\end{algorithm}

\begin{remark}[Local conservation and well-balanced property]
Similar to the proof of local conservation in Theorem \ref{thm1}, the 
forward Euler algorithm Algorithm \ref{alg1} is also locally conservative.
Meanwhile, the local conservation property is not polluted by the convex combination step in Algorithm \ref{alg2} due to the use of conservative variables in convex combination. 
If the velocity $\bld u_h$ were to be used in the convex combination step, local conservation for the discharge would be lost.
This is the major reason that the discharge $\bld m_h$ is re-introduced as an independent unknown in the DG formulation. Hence the overall algorithm 
Algorithm \ref{alg3} is locally conservative.
Moreover, it is easy to see that Algorithm \ref{alg3}
preserve the steady state solution \eqref{wb}, hence it is also well-balanced.
\end{remark}

\begin{remark}[Computational cost and positivity of water height]
The computational cost of Algorithm \ref{alg1} involves the evaluation of the right hand side operators, and solving the associated linear system for the mass matrix for $V_h^k$ in Step 1, 
and for $\bld V_h^k$ in Step 2.
The mass matrices can be made diagonal if orthogonal $L^2$-basis is used in the computation.
The computational cost of Algorithm \ref{algV} involves the linear system solve of a water height-weighted mass matrix, which is block diagonal and can be computed very efficiently.
The Algorithm \ref{alg2} is simply a vector update.
Hence, the computational cost in Algorithm \ref{alg3} is of linear complexity with respect to the total number of elements, which is similar to,  but slightly more expensive than (due to the velocity computation in Algorithm \ref{algV}), the cost of a classical DG scheme with SSP-RK3 time stepping.

Note that in Algorithm \ref{algV}, we need to invert the water height-weighted mass matrix to compute the velocity approximation $\bld u_h$.
This weighted mass matrix may fail to be invertible if the water height approximation $h_h$ become negative in parts of the domain.
In practice, requiring positivity of water height on the volume integration points used to compute these mass matrices suffice to ensure its invertibility, which, however, is not guaranteed in the plain Algorithm \ref{alg3}.
In the next subsection, we apply the positivity-preserving limiting approach used in \cite{XZ13} to guarantee such positivity requirement.
\end{remark}

\subsection{Hydrostatic reconstruction and posivitivity-preserving limiter}
The key idea of the posivity-preserving limiter in \cite{XZ13} is to ensure the cell average of water height is positive after one step of forward Euler time stepping under a usual CFL condition.
Given solution $h_h^n, \bld u_h^n$ at time $t^n$, and time step size $\Delta t$, the water height $h_h^{n+1}$ at next time level for the forward Euler Algorithm \ref{alg1} reads as follows:
\begin{align*}
  M_h(h_h^{n+1}, e_h) = M_h(h_h^{n}, e_h) - \Delta tA_h((h_h^{n}, \bld u_h^{n}), e_h),\quad \forall 
  e_h\in V_h^k.
\end{align*}
Taking test function $e_h=1$ on a single element $K\in\Oh$, we get the evolution equation for the water height cell average:
\begin{align}
\label{cell}
\bar h_K^{n+1} = \bar h_K^{n} - \frac{\Delta t}{|K|}\int_{\partial K}\widehat{h_h^n\bld u_h^n}\cdot\bld n_K\,\mathrm{ds}
\end{align}
where $\bar{h}_K^n$ stands for the cell average of $h_h$ on the triangle $K$ at time level $t^n$, and $|K|$ is the area of the element $K$.
Due to  the fact that the numerical flux \eqref{flux} contains the jump of bottom topography $b_h$, which can arbitrarily large, we can not prove positivity of $\bar h_K^{n+1}$ in the above equation \eqref{cell} under the condition of positivity of $\bar h_K^n$ and a reasonable time step size restriction. To fix this, we slightly modify the numerical fluxes using the idea of hydrostatic reconstruction \cite{ABF04,XZ13}. 
In particular,  introducing the following hydrostatic reconstructed version of the water height:
\begin{subequations}
\label{hydro}
\begin{align}
    h_h^{*,+} := \max\left\{0, h^+ +\min\{0, \jmp{b_h}\} \right\},\\
    h_h^{*,-} := \max\left\{0, h^- -\max\{0, \jmp{b_h}\} \right\},
\end{align}
\end{subequations}
we replace $h_h^{\pm}$ in the flux terms in the scheme \eqref{dg} 
by $h_h^{*,\pm}$, and replace the associated the jump term $\jmp{h_h+b_h}$ by $\jmp{h_h^{*}} = 
h_h^{*,+}-h_h^{*,-}$.
For example, the flux \eqref{flux} is now replaced by the following one:
\begin{align}
    \label{fluxH}
    \widehat{h_h^*\bld u_h}\cdot\bld n:=\avg{h_h^*\bld u_h}\cdot\bld n
    +\frac12\alpha_h^*\jmp{h_h^*},
\end{align}
with
\begin{align}
\label{speed2}
  \alpha_h^*|_F :=\max\left\{\sqrt{gh_h^{*,+}}+|\bld u_h^+\cdot n|,
  \sqrt{gh_h^{*,-}}+|\bld u_h^-\cdot n|\right\}.
\end{align}
It is clear that if $h_h$ satisfies $h_h+b_h=Const$ with $h_h>0$, then 
\begin{align*}
    h_h^{*,+} = h^+ +\min\{0, \jmp{b_h}\},\\
    h_h^{*,-} =  h^- -\max\{0, \jmp{b_h}\},
\end{align*}
and $\jmp{h_h^*} \equiv \jmp{h_h+b_h}$, which implies that the modified fluxes will not pollute the well-balanced property of the original fluxes.

With this modification, the forward Euler discretization lead to the following cell average evolution for water height:
\begin{align}
\label{cell2}
\bar h_K^{n+1} =&\; \bar h_K^{n} - \frac{\Delta t}{|K|}\int_{\partial K}\widehat{h_h^{*,n}\bld u_h^n}\cdot\bld n_K\,\mathrm{ds}\nonumber\\
=&\; \bar h_K^{n} - \frac{\Delta t}{|K|}\int_{\partial K}
\left(\avg{h_h^{*,n}\bld u_h^{n}}\cdot\bld n_K + \frac12\alpha_h^{*,n}\jmp{h_h^{*,n}}_K\right)
\,\mathrm{ds},
\end{align}
where $\jmp{h_h}_K:= h_h^{int(K)} - h_h^{ext(K)}$ is the jump, with 
$h_h^{int(K)}$ and $h_h^{ext(K)}$ being the approximations 
obtained from the interior and the exterior of $K$.
Note that by definition, on any edge $F=K^+\cap K^-$ shared by two elements, there holds 
\begin{align*}
    \jmp{h_h} = h_h|_{K+}-h_h|_{K^-} = \jmp{h_h}_{K^+}  = -\jmp{h_h}_{K^-}.
\end{align*}
The cell average evolution equation \eqref{cell} now has a similar form as 
\cite[Equation 3.1]{XZ13}. Hence, we can follow the same analysis in \cite[Section 3]{XZ13} to ensure positivity of the water height cell average at next time level. 
The following result is Theorem 3.2 in \cite{XZ13}. The proof is almost identical, hence we omit it for simplicity.
\begin{theorem}[Theorem 3.2 in \cite{XZ13}]
\label{thm:pp}
For the scheme \eqref{cell2} to be positivity preserving, i.e., $\bar h_K^{n+1}\ge 0$, a sufficient condition is that $h_K(\bld x)\ge 0$, 
$\forall \bld x\in S_K$ for all $K$, under the CFL condition
\begin{align}
\label{cfl}
    \alpha \frac{\Delta t}{|K|}|\partial K| \le \frac23\widehat w_1.
\end{align}
Here 
$h_K(\bld x)$ denotes the polynomial for water height at time level $n$,
$S_K$ is a set of (symmetric) quadrature points on $K$ that includes 
$k+1$ Gauss quadrature points on each boundary edge,
$\alpha$ is the maximum estimated speed \eqref{speed2}, 
$|\partial K|$ is the perimeter of element $K$, and 
$\widehat w_1$ is the quadrature weight of the $\lceil \frac{k+3}{2}\rceil$-point Gauss-Lobatto rule on
$[-1/2, 1/2]$ for the first quadrature point.
\end{theorem}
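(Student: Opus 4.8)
The plan is to reproduce, in the present notation, the Zhang--Shu positivity argument \cite{ZS11,XZ13} in the triangular-mesh form on which \cite{XZ13} is based. The only ingredients needed are the cell-average identity \eqref{cell2}, the hydrostatic reconstruction \eqref{hydro}--\eqref{speed2}, and the monotonicity of the local Lax--Friedrichs numerical flux \eqref{fluxH}.

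First I would record the special quadrature on a triangle. For each edge $e\subset\partial K$ one builds a tensor-type rule that places the $\lceil\frac{k+3}{2}\rceil$-point Gauss--Lobatto rule in the direction transversal to $e$ --- so that its two endpoints land on $e$ --- and a $(k+1)$-point Gauss rule in the tangential direction, and then averages these three rules with weight $\tfrac13$. The resulting rule has strictly positive weights, its node set is exactly $S_K$, it is exact for polynomials of degree $k$ and hence reproduces $\bar h_K^n$ exactly; moreover it assigns to the $j$-th Gauss point $\bld x_{e,j}$ of edge $e$ a weight of the form $\widehat w_1\,\beta_{e,j}$, where $\beta_{e,j}>0$ involves the tangential Gauss weight and a geometric factor and $\widehat w_1$ is the endpoint Gauss--Lobatto weight in the statement; the remaining nodes of $S_K$ are interior to $K$.

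Next I would insert this decomposition of $\bar h_K^n$ into \eqref{cell2}. The interior-node part of $\bar h_K^n$ contributes a sum of nonnegative multiples of $h_K(\bld x)$, hence is $\ge 0$ by hypothesis. The boundary part, together with the flux integral $-\tfrac{\Delta t}{|K|}\int_{\partial K}(\cdots)\,\mathrm{ds}$ --- itself a sum over the three edges of $(k+1)$-point Gauss rules --- regroups point by point: for each $\bld x_{e,j}$ one obtains
\[
\widehat w_1\,\beta_{e,j}\left(h_K(\bld x_{e,j})-\frac{\Delta t}{|K|}\,\frac{|e|\,w_j}{\widehat w_1\,\beta_{e,j}}\,\widehat F_{e,j}\right),\qquad \widehat F_{e,j}:=\Big(\avg{h_h^{*}\bld u_h}\cdot\bld n_K+\tfrac12\alpha_h^{*}\,\jmp{h_h^{*}}_K\Big)\big|_{\bld x_{e,j}},
\]
which is exactly one forward-Euler step of the scalar balance law with the monotone hydrostatic local Lax--Friedrichs flux.

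Finally I would check scalar positivity. Freezing $\alpha_h^{*}$ at the edge maximum $\alpha$ of \eqref{speed2}, $\widehat F_{e,j}$ is nondecreasing in the interior height $h^{int}$ and nonincreasing in the exterior height $h^{ext}$, and it vanishes at $h^{int}=h^{ext}=0$ because \eqref{hydro} satisfies $0\le h_h^{*,\pm}\le h^{\pm}$ and is monotone nondecreasing in $h^{\pm}$. Writing the bracket as $H(h^{int},h^{ext})$, monotonicity in $h^{ext}$ is immediate while monotonicity in $h^{int}$ holds as soon as the coefficient in front of $\widehat F_{e,j}$ is small enough; collecting the geometric constants ($|e|$, $|K|$, $|\partial K|$, the Gauss and endpoint Gauss--Lobatto weights, the $\tfrac13$ from averaging over the three edges) turns this into precisely \eqref{cfl}, with the factor $\tfrac23$ emerging from that geometry. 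Hence $H(h^{int},h^{ext})\ge H(0,0)=0$ whenever $h_h^n\ge 0$ at all nodes of $S_K$ and of the neighbouring $S_{K'}$ (covered by the hypothesis "for all $K$"), and adding the nonnegative interior and boundary contributions gives $\bar h_K^{n+1}\ge 0$. The main obstacle here is bookkeeping rather than analysis: one must pin down the positive-weight triangle quadrature supported on $S_K$ and track all geometric constants so that the sufficient CFL comes out in the clean form \eqref{cfl}, and one must be careful that letting $\alpha_h^{*}$ depend on the reconstructed heights does not spoil monotonicity, which is why it is bounded from above by the fixed edge maximum $\alpha$ in the estimate. Since all of this is carried out in detail in \cite{XZ13} and the triangular quadrature construction it cites, the computation can be omitted.
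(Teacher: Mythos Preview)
Your proposal is correct and follows exactly the Zhang--Shu positivity argument of \cite{XZ13} that the paper invokes; the paper itself omits the proof and simply cites \cite{XZ13}, so your reconstruction is precisely what the authors defer to.
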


At time level $n$, given the water height DG polynomial $h_K(\bld x)$ with its cell average $\bar h_K^n\ge0$, we use the simple scaling limiter \cite[Section 3.4]{XZ13} to ensure the above sufficient condition $h_K(\bld x)\ge0$ for all 
$\bld x\in S_K$, i.e., replacing 
$h_K(\bld x)$ by a linear scaling around the cell average:
\begin{align}
\label{pplimit}
    \widetilde{h}_K(\bld x) = \theta_K(h_K(\bld x)-\bar h_K^n)+\bar h_K^n,
\end{align}
where $\theta_K\in[0,1]$ is determined by 
\begin{align}
\label{scale}
    \theta_K := \min_{\bld x\in S_K}\theta_{\bld x},
    \quad\theta_{\bld x} =\min\left\{1, 
    \frac{\bar h_K^n }{\bar h_K^n-h_K(\bld x)}
    \right\}.
\end{align}
A slightly more efficient and less restrictive scaling parameter $\theta_K$ can be obtained using a reduced set of quadrature points, see \cite[Section 3.4]{XZ13} for more details.
In practice, the positivity preserving limiter \eqref{pplimit} is applied in each inner stage of the RKDG algorithm \ref{alg3}.
We notice that the well-balanced property is also not affected by this positivity preserving limiter.

\begin{remark}[On hydrostatic reconstruction and bottom topography approximation]
The proof of Theorem \ref{thm:pp} requires the use of hydrostatic reconstruction \eqref{hydro}, which is needs due to the lack of control of the bottom topography jump $\jmp{b_h}$ across edges. 
When the polynomial degree $k\ge 1$ in the DG scheme \eqref{dg}, 
one can approximate the bottom topography using a continuous approximation 
$b_h\in V_h^k\cap H^1(\Omega)$, which implies $\jmp{b_h}=0$. 
In this case, under the positivity assumption of Theorem \ref{thm:pp}, we have 
$h^{*,\pm}_h = h^{\pm}_h$, hence equivalence of the original scheme \eqref{cell} and the reconstructed version \eqref{cell2}.
For this reason, we prefer to use a continuous bottom topography approximation for $k\ge 1$, where the hydrostatic reconstruction \eqref{hydro} is not necessary anymore.
\end{remark}

\subsection{The troubled-cell indicator and slope limiter}
Another important ingredient of the DG methods is the slope
limiter procedure which is needed to suppress spurious oscillations near solution discontinuities.
We follow the standard slope limiting procedure in 
RKDG methods \cite{CS01, QS05}: 
\begin{itemize}
    \item [(1)] First we identify the {\it troubled cells}, namely, those cells which might need the limiting procedure.
    \item [(2)] Second we replace the solution polynomials in those troubled cells by reconstructed polynomials
    with limited slopes that  maintain the original
cell averages (conservation).
\end{itemize}
We use the Fu-Shu troubled-cell indicator proposed in \cite{FS17} to identify the troubled cells, with a scaling modification to improve its performance and computational efficiency. 
Given a discontinuous function $p\in V_h^k$, the 
troubled-cell indicator \cite{FS17} is given as follows:
\begin{align}
    \label{indicator}
    I_{K}(p) = \frac{\sum_{T\in \omega(K)}|\bar{\bar p}_T-\bar p_K|}{\bar p_{\max}-\bar p_{\min}},
\end{align}
where $\omega(K)$ is the union of cells that share a common edge with 
$K$, including $K$ itself, and $\bar{\bar p}_T$ is the cell average of the polynomial $p|_T$ extended to the target cell $K$, and 
$\bar p_{\max}$ and $\bar p_{\min}$ are the global maximal and minimal cell average on the domain.
Relying on the {\it assumption} \cite{CS98, CS01} that spurious oscillations are present in the solution $p_h$ only if they are present in its linear part $p_h^1$, which is its $L^2$-projection into the space of piecewise linear functions 
$V_h^1$, we use use the linear $L^2$-projection of the total height $h_h+b_h$ as the indicating function in \eqref{indicator}, which simplifies the implementation of the extended cell average $\bar{\bar h}_h$  for high-order case where the polynomial degree $k>1$. The cell $K$ is marked as a troubled cell if 
\begin{align}
    \label{tol}
    I_K(h_h^1+b_h^1) > tol,
\end{align}
where $tol$ is a user defined parameter. 
Note that this indicator is of $\mathcal{O}(h^2)$ in smooth regions, and of $\mathcal{O}(1)$ near discontinuities, hence is expected to be effective to detect troubled cells near discontinuities with a proper choice of $tol$.
Our numerical experiments suggest that the indicator is not too sensitive to the tolerance $tol$. The indicator with  $tol\in (0.01, 0.1)$ performs similarly for most of the examples, where $tol=0.01$ leads to a slightly larger number of detected troubled cells than $tol=0.1$ as expected. In our implementation, we take $tol=0.02$ for all the reported results.

\begin{remark}[On scaling of the indicator \eqref{indicator}]
The original indicator proposed in \cite{FS17} use the local maximal cell average $\max_{T\in \omega(K)}\{\bar p_T\}$ as the scaling denominator. This scaling has the drawback of not able to detect any troubled cells for small perturbation tests where the total height is a very small perturbation of a constant state. In particular, the original indicator with a local maximum scaling will produce a completely different result when the indicating function is perturbed by a global constant $p(x)\leftarrow p(x)+Const$. 
The new global difference  scaling denominator $\bar p_{\max}-\bar p_{\min}$ now produce the same indicating value when the indicating function is perturbed by a global constant. It performs quite well for all the numerical examples reported here.
We further mention that this global scaling is suggested to us by Prof. Chi-Wang Shu from Brown University in a private communication.
\end{remark}


After the troubled cells have been detected, we apply the characteristic-wise TVB limiter \cite{CS98, CS01} on the 
conservative variables $(h_h+b_h, \bld m_h)$
with TVB parameter $M=0$. 
To save space, 
we leave out the derivation of this limiter and refer to \cite{CS98, CS01} for details.
We mention that while this TVB limiter is compatible with the well-balanced property of the DG scheme as $h_h+b_h$ is used in the limiting process.
In practice, we first apply this TVB limiter then apply the positivity preserving limiter \eqref{pplimit} for each inner Runge-Kutta stage values.

\subsection{Velocity computation and dry cell treatment}
\label{rk:dry}
We note that while Theorem \ref{thm:pp} and the limiter 
\eqref{pplimit} ensures non-negativity of the water height cell average $\bar h_K^{n+1}$, and water height on the quadrature points $S_K$ at the next time level, this in general is not enough for the invertibility of the water height-weighted mass matrix, which is needed to compute the velocity approximation. 
The invertibility of this weighted mass matrix is not a big issue as one can compute the scaling factor in \eqref{scale} such that it ensures posivitity of water height on all volume integration points in each cell.

A more serious issue is the velocity computation on dry cells with nearly zero water height, this weighted mass matrix is nonsingular but now close to zero, and the computed velocity approximation may be unphysically large and not reliable anymore.
Without a special velocity treatment on dry cells, the scheme (with TVB and posivity preserving limiters) may still fail to solve challenging problems with moving interface with wet and dry areas.
There are various wetting/drying treatment available in the literature \cite{KP07, BKW09}. However, our preliminary numerical experiments suggest that the most common approaches may not work well for our velocity based DG scheme. 
For example, the simple trick of setting zero velocity when the water height $h_h$ is less than a given threshold, e.g. $10^{-6}$,  which worked in \cite{XZS10}, or using 
a regularized water height 
\[
h^*:= \frac12 h+\frac12 \max\{h, (tol)^2/h\},
\]
with $tol$ a given small tolerance, 
to compute the weighted mass matrix in Algorithm \ref{algV} as suggested 
in \cite{KP07}
were not enough for our scheme with polynomial degree $k=2$ to solve a dam break problem on a dry bed.

After some initial testing, we come up with a relative simple velocity limiting approach that works for the numerical results reported in this manuscript. We apply the following two steps after an inner stage 
water height $h_h$ and discharge $\bld m_h$ has been computed by  Algorithm \ref{alg1}:

(1)    Given a threshold percentage $0<\epsilon_{d}\ll 1$, we first mark  cells with cell average $\bar h_K \le \epsilon_d\times  h_{\max}^0$ as {\it dry} cells, where $h_{\max}^0$ is the maximum water height at initial time. Then, we remove the {\it high order} information on these dry cells by reverting to piecewise constant approximation of water height and discharge:
    \begin{align}
        \label{dry}
        h_K\leftarrow \bar h_K, \quad 
        \bld m_K\leftarrow \bar{\bld m}_K, \quad
        \text{ for all } K\in \Omega_h \text{ such that }
        \bar h_K \le \epsilon_d\times  h_{\max}^0,
    \end{align}
    where $h_K$ and $\bld m_K$ are the polynomial data in cell $K$, and
    $\bar h_K$ and $\bar {\bld m_K}$ are the cell averages. 
    Note that this approach does not affect the local conservation property, but may lead to accuracy loss. However, since there are only a small amount of water in dry cells, such loss of accuracy may not be too significant if $\epsilon_d$ is taken small enough. We note that similar treatment was used in \cite{BKW09}.
    
    (2)
    The above approach may not be enough to control the velocity magnitude for high order schemes when $\epsilon_d$ is taken to be too small. We further propose a velocity limiter to smooth out extreme velocity values.
Given a user tunable value $V_{\max}$, which is an estimation of maximal allowed velocity approximation, we do the following two steps
for each component of the velocity approximation:
\begin{itemize}
    \item [(i)] 
    Identify the collection of {\it troubled velocity cells}, denoted as $\omega(u_h)$, for the velocity
    component $u_h$ as the cells where the maximum of the absolute velocity is larger than $V_{\max}$. For polynomial degree $k=2$ on triangles, the maximal value in the triangle is estimated as the maximal value on three vertices and three mid points of each edge:
    \begin{align}
        \label{tx}
        \omega(u_h):=\{K\in \Omega_h:\quad 
        \max_{x\in v(K)}{|u_h(x)| > V_{\max}},
        \}
    \end{align}
    where $v(K)$ is the collection of three vertices and three edge midpoints of cell $K$.
    \item [(ii)] 
    On each of these troubled cells, we remove the velocity data, and  replace it by the average of cell averages of its immediate neighboring cells which are not marked as troubled cells.
    We repeat this procedure until all troubled cells have an updated (constant) velocity value:
    \begin{align}
        \label{tm}
&        \text{While $\omega(u_h)$ is not empty, do the following: }\nonumber\\
&\;\;\quad\quad        u_K\leftarrow \text{average of } \{\bar u_T\}
        \text{ for } T\in \omega(K) \text{ and } T\not \in \omega(u_h).\\
&\;\;\quad\quad \text{remove cell $K$ from $\omega(u_h)$ if its value has been updated.}\nonumber        
    \end{align}
\end{itemize}
We note that the above velocity limiting procedure does not affect the local conservation property as the water height and discharge cell averages were never changed.
The above two approaches introduce two tunable parameters, namely $\epsilon_d$ in \eqref{dry}, and $V_{\max}$ in \eqref{tx}.
They will be chosen accordingly for specific wetting/drying examples.
For example, we can take $\epsilon_d = 5\times 10^{-3}$, and take $V_{\max}$ based on the maximum velocity magnitude for the lowest order scheme with $k=0$ for problems with moving wet/dry interfaces. With the above wetting/drying treatment, we are able to run simulation for the circular dam break problem with a dry bed, and 
the dam bream problem with three mounds on unstructured triangular grids. 
We mention that the above treatments are far away from perfect yet, as they need parameter tuning, and may lead to accuracy loss near dry cells.
They only serves as initial approaches for a successful simulation of SWEs with moving wet/dry interfaces. We are planning to further investigate more 
robust and accurate wetting and drying treatments for our velocity based DG scheme in the near future.


For completeness, we list the final form of the fully discrete scheme below.
This method is locally conservative, well-balanced, and positivity preserving provided the time step size $\Delta t$ satisfy the CFL
condition \eqref{cfl}.
In practice, we take the time step size to be 
\begin{align}
\Delta t = cfl \min_{K\in\Oh}\{\tau_K/\alpha_K^{\max}\},
\end{align}
where $cfl$ is the CFL number which depends on the polynomial degree $k$, 
$\tau_K$ is the mesh size, and $\alpha_K^{\max}$ is the estimated maximum speed on the cell $K$.
If we detect a water height cell average $\bar h_K< \epsilon=10^{-12}$ in the inner stages in Step 1/2/4 of Algorithm \ref{alg3pt}, which means the time step size does not satisfy the condition \eqref{cfl}, we simply decrease $\Delta t$ by a half and redo the whole computation.

\begin{algorithm}
\caption{Posivity-preserving SSP-RK3 + DG + TVB limiter + wetting/drying treatment}
\begin{algorithmic}[1]
\REQUIRE $h_h^{n}\in V_h^k$, $\bld u_h^{n}, \bld m_h^{n}\in \bld V_h^k$ at time level $t^n$, and time step size $\Delta t>0$.
$tol >0$ for TVB limiter indicator \eqref{indicator}, 
$\epsilon_d>0$ for dry cell indicator \eqref{dry},
and 
$V_{\max}>0$ for troubled velocity cell indicator \eqref{tx},

\ENSURE $h_h^{n+1}\in V_h^k$, and $\bld u_h^{n+1}, \bld m_h^{n+1}\in \bld V_h^k$ at next time level $t^{n+1}:=t^n+\Delta t$.
  \STATE  Apply Algorithm \ref{alg1} with inputs $h_h^n, \bld u_h^n, \bld m_h^n$ and $\Delta t$.
  (If bottom topography $b_h$ is discontinuous, apply the hydrostatic reconstruction \eqref{hydro} in flux evaluations.)
  Denote the outputs as $h_h^{(1)}, \bld m_h^{(1)}$.
  \STATE Apply the dry cell limiter \eqref{dry}
  for $h_h^{(1)}$ and $\bld m_h^{(1)}$; 
  Apply the characteristic-wise TVB limiter for the variables $(h_h^{(1)}+b_h, \bld m_h^{(1)})$ using indicator \eqref{indicator}
  with indicating function
 $h_h^{(1)}+b_h$; 
    Apply the positivity preserving limiter for $h_h^{(1)}$. 
\STATE  Apply Algorithm \ref{algV} with inputs $h_h^{(1)}, \bld m_h^{(1)}$.
Denote the velocity output as $\bld u_h^{(1)}$. 
Then apply the velocity limiter in \eqref{tm}. 
\STATE Apply Algorithm \ref{alg1} with inputs $h_h^{(1)}, \bld u_h^{(1)}, \bld m_h^{(1)}$ and $\Delta t$. Denote outputs as $h_h^{(2*)}, \bld m_h^{(2*)}$.
  \STATE 
  Apply Algorithm \ref{alg2} with inputs $h_h^{n}, h_h^{(2*)}, \bld m_h^{n}, \bld m_h^{(2*)}$ and weights $w_1=0.75, w_2 = 0.25$.
  Denote outputs as  $h_h^{(2)}, \bld m_h^{(2)}$.
  \STATE Apply the dry cell limiter \eqref{dry}
  for $h_h^{(2)}$ and $\bld m_h^{(2)}$; 
  Apply the characteristic-wise TVB limiter for the variables $(h_h^{(2)}+b_h, \bld m_h^{(2)})$ using indicator \eqref{indicator}
  with indicating function
 $h_h^{(2)}+b_h$; 
    Apply the positivity preserving limiter for $h_h^{(2)}$. 
 \STATE  Apply Algorithm \ref{algV} with inputs $h_h^{(2)}, \bld m_h^{(2)}$.
Denote the velocity output as $\bld u_h^{(2)}$. 
Then apply the velocity limiter in \eqref{tm}. 
\STATE Apply Algorithm \ref{alg1} with inputs $h_h^{(2)}, \bld u_h^{(2)}, \bld m_h^{(2)}$ and $\Delta t$. Denote outputs as $h_h^{(3*)}, \bld m_h^{(3*)}$.  
 \STATE 
  Apply Algorithm \ref{alg2} with inputs $h_h^{n}, h_h^{(3*)}, \bld m_h^{n}, \bld m_h^{(3*)}$ and weights $w_1=1/3, w_2 = 2/3$.
  Denote outputs as  $h_h^{n+1}, \bld m_h^{n+1}$.
  \STATE Apply the dry cell limiter \eqref{dry}
  for $h_h^{n+1}$ and $\bld m_h^{n+1}$; 
  Apply the characteristic-wise TVB limiter for the variables $(h_h^{n+1}+b_h, \bld m_h^{n+1})$ using indicator \eqref{indicator}
  with indicating function
 $h_h^{n+1}+b_h$; 
    Apply the positivity preserving limiter for $h_h^{n+1}$. 
   \STATE  Apply Algorithm \ref{algV} with inputs $h_h^{n+1}, \bld m_h^{n+1}$.
Denote the velocity output as $\bld u_h^{n+1}$. 
Then apply the velocity limiter in \eqref{tm}. 
\end{algorithmic}
\label{alg3pt}
\end{algorithm}

\section{Numerical results}
In this section we present numerical results of our velocity based DG scheme Algorithm \eqref{alg3pt}. We report results using the third order DG method with $k = 2$. The CFL number is taken to be $cfl = 0.1$ for 1D examples, and $cfl=0.05$ for 2D examples.
The gravitation constant $g$ is fixed as 9.812 except the test in Example 4.6, where it is taken to be $g=10$.
We take the tolerance $tol = 0.02$ in the Fu-Shu indicator \eqref{indicator} for all examples. 
Moreover, unless explicitly mentioned, we turn off the dry cell limiter \eqref{dry} and the velocity limiter \eqref{tx}--\eqref{tm}, which are only needed when the problem has a moving dry/wet interface.

The implementation is based on the python interface of the NGSolve 
software \cite{Schoberl16}, \url{https://ngsolve.org/}.
Source code for all the examples can be found in the git repository, \url{https://github.com/gridfunction/SWE}.

\subsection*{Example 4.1: Accuracy Test in 1D}
We start with an accuracy test to demonstrate the high order accuracy of our schemes for a smooth solution of the SWEs. Following the setup in \cite{WDGX20}, we take the following bottom topography and initial
conditions:
\begin{align*}
    &b(x) =\;  \sin^2(\pi x),\quad h(x,0) = \; 5 + e^{\cos(2\pi x)},\quad hu(x,0) = \;\sin(\cos(2\pi x)).
\end{align*}
The computation domain is a periodic unit interval $[0, 1]$, and final time is $t=0.1$ where the solution is still smooth.
We apply the plain Algorithm \ref{alg3} without limiter, and compute the $L^2$-errors of water height $h_h$, velocity $u_h$, and discharge $m_h$ on a sequence of uniform meshes with $N=50\times 2^l$ cells for $l=0,1,2,3,4$. We take the solution on $N=25\times 2^5=1600$ cells 
as the {\it reference}  solution when computing these $L^2$-errors.
The results are recorded in Table \ref{tab:1}.
We clearly observe the expected third order convergence, and the error magnitude are comparable to the results reported in \cite{WDGX20}
for another third order entropy stable DG scheme.
\begin{table}[ht!]
    \centering
    \begin{tabular}{c|cc|cc|cc}
    N    & $L^2$-err in $h_h$ & rate 
    & $L^2$-err in $u_h$ & rate
&     $L^2$-err in $m_h$ & rate\\
\hline
 50& 2.997e-04 & -- & 3.583e-04 & -- & 2.577e-03 &-- \\
100& 2.730e-05 & 3.46 & 3.273e-05 & 3.45 & 2.352e-04 & 3.45 \\
200& 2.949e-06 & 3.21 & 3.538e-06 & 3.21 & 2.542e-05 & 3.21 \\
400& 3.600e-07 & 3.03 & 4.323e-07 & 3.03 & 3.103e-06 & 3.03 \\
800& 4.408e-08 & 3.03 & 5.296e-08 & 3.03 & 3.798e-07 & 3.03 \\
\hline
    \end{tabular}
    \vspace{1ex}
    \caption{$L^2$ errors and
convergence rate at time $t=0.1$ for  Example 4.1.}
    \label{tab:1}
\end{table}

\subsection*{Example 4.2: The Well-Balanced Test in 1D}
In this example, we test the well-balanced property of our proposed methods to ensure that
the still-water steady state is exactly preserved. 
We consider two different choices of the bottom topography as used in \cite{WDGX20}:
a smooth bottom with 
\[
b(x) = 5\exp\left(-0.4(x-5)^2
\right),
\]
and a discontinuous bottom with 
\[
b(x) = \left\{
\begin{tabular}{ll}
4, & if $4\le x\le 8$,\\[.3ex]
0, & otherwise.
\end{tabular}
\right.
\]
The computational domain is $[0, 10]$ with wall boundary conditions.
The initial condition is taken as the stationary state
\[
h+b = 10, u = 0.
\]
We solve the problem until time $t=0.5$ on three meshes with 100, 2000, and 400 uniform cells, and record the $L^2$-errors in Table \ref{tab:2}. We observe all errors are at the level of
round-off errors, which verifies the well-balanced property.
\begin{table}[ht!]
    \centering
    \begin{tabular}{c|ccc|ccc}
    & & smooth bot. & && disc. bot. & 
    \\
    \hline
    N    & $L^2$-err in $h_h$ 
    & $L^2$-err in $u_h$ 
&     $L^2$-err in $m_h$ 
& $L^2$-err in $h_h$ 
    & $L^2$-err in $u_h$ 
&     $L^2$-err in $m_h$ \\
\hline
100& 9.819e-14 & 7.380e-14 & 5.025e-13 & 7.700e-14 & 7.627e-14 & 5.024e-13 \\
200& 1.747e-13 & 8.344e-14 & 6.331e-13 & 1.805e-13 & 1.042e-13 & 8.494e-13 \\
400& 3.740e-13 & 1.328e-13 & 1.029e-12 & 2.787e-13 & 2.153e-13 & 1.355e-12 \\
\hline
    \end{tabular}
    \vspace{1ex}
    \caption{$L^2$ errors at time $t=0.5$ for  Example 4.2.}
    \label{tab:2}
\end{table}

\subsection*{Example 4.3: A Small Perturbation Test in 1D}
We test the following quasi-stationary test case proposed by LeVeque \cite{LeVeque98}, which is a small
perturbation of the steady state solution.
The computational domain is $[0, 2]$, and the bottom topography b(x) is given by
\[
b(x) = \left\{
\begin{tabular}{ll}
$\frac14(\cos(10\pi(x-1.5))+1)$, & if $1.4\le x\le 1.6$,\\[.7ex]
0, & otherwise.
\end{tabular}
\right.
\]
The initial conditions are 
\[
h(x,0) = \left\{
\begin{tabular}{ll}
$1-b(x)+\epsilon$, & if $1.1\le x\le 1.2$,\\[.7ex]
$1-b(x)$, & otherwise.
\end{tabular}
\right., \quad u(x,0) = 0,
\]
where $\epsilon$ is a given constant representing the size of the perturbation.
Following \cite{LeVeque98},
we consider  a
 case with a big pulse ($\epsilon = 0.2$) and case with a small pulse ($\epsilon=0.001$). The final time of simulation is $t=0.2$.
We compare our scheme on a uniform coarse mesh with $N=200$ cells and 
a uniform fine mesh with $N=2000$ cells. 
The results at final time 
for  the total water surface $h+b$ and discharge $m=hu$ for the big pulse case
are shown in Figure \ref{fig:sp1}, and 
those for the small pulse case
are shown in Figure \ref{fig:sp2}.
In these figures, blue squares indicate the troubled cells identified by our indicator \eqref{indicator}.
We observe good agreement of the results on coarse and fine meshes without spurious
numerical oscillations, which also agrees well with results in the literature. 
Moreover, we observe that the indicator \eqref{indicator} with $tol=0.02$ only activates cells close to the moving shocks for both case, with slightly more cells being identified as troubled cells for the small perturbation test. 

\begin{figure}[ht]
\centering
\includegraphics[width=0.45\textwidth]{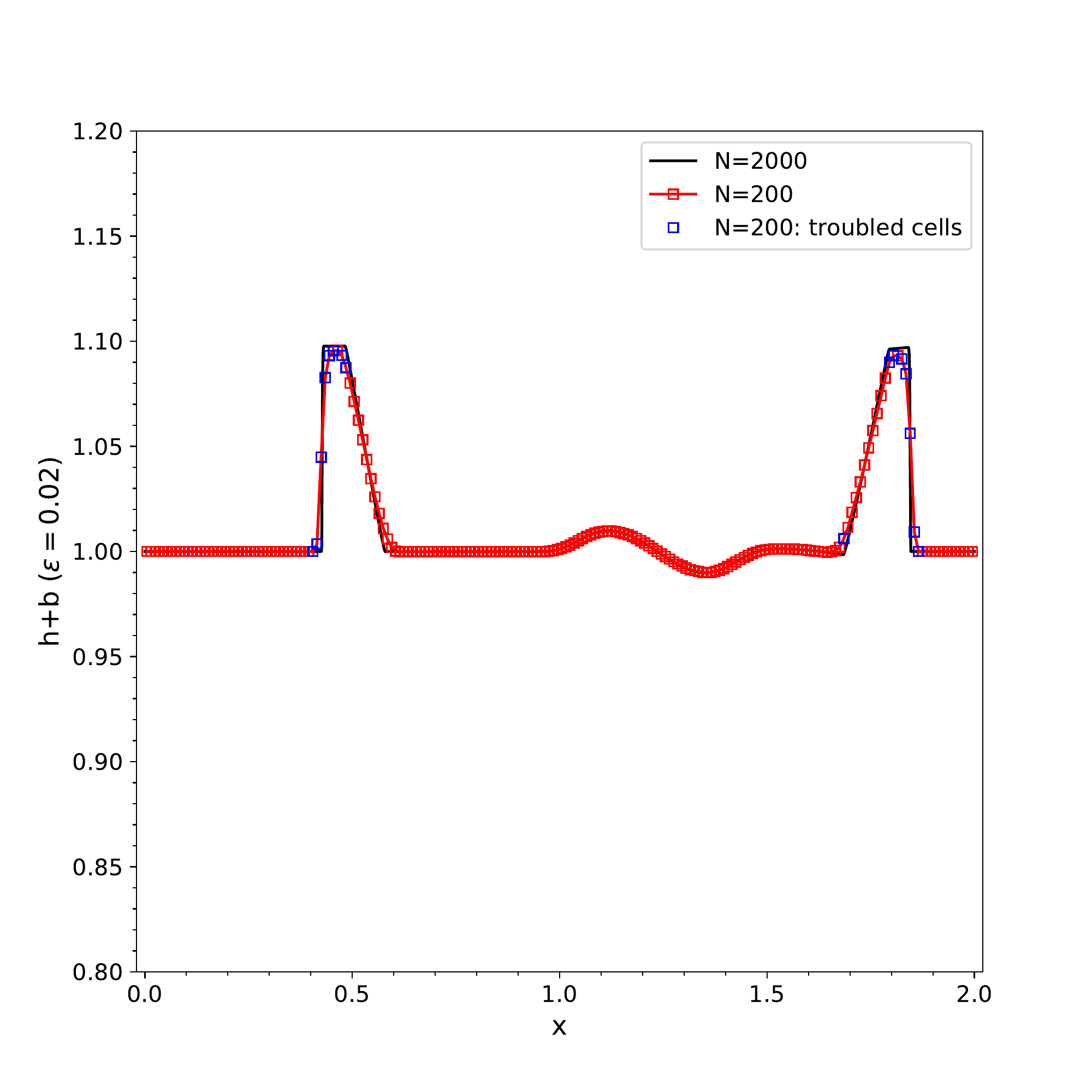}
\includegraphics[width=0.45\textwidth]{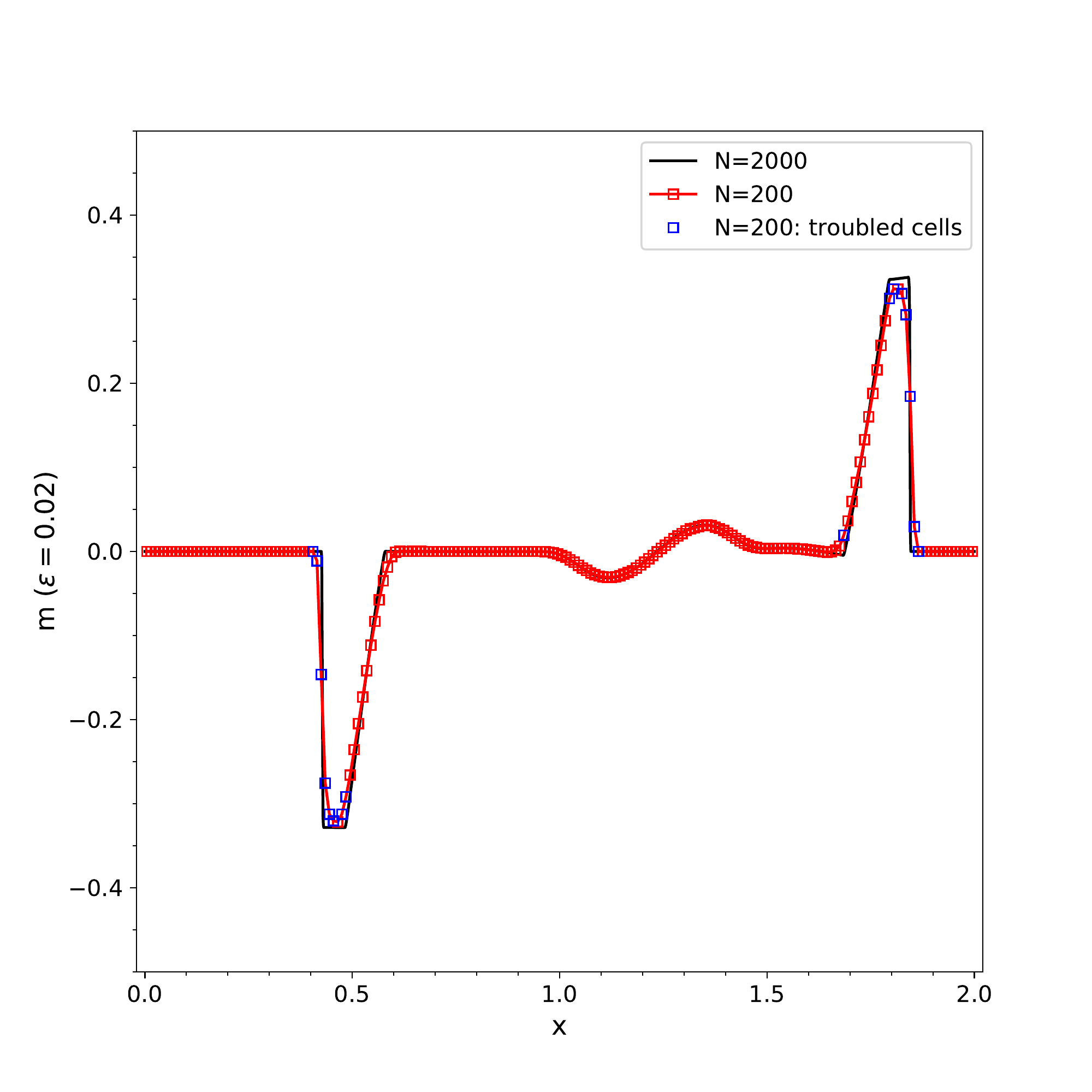}
\caption{Example 4.3 with 
with a big pulse 
$\epsilon=0.2$  at time $t = 0.2$. Left: the water
surface $h + b$; right: the discharge $m$. Blue squares indicate cells where TVB limiter are used at the final time.
}
\label{fig:sp1}
\end{figure}

\begin{figure}[ht]
\centering
\includegraphics[width=0.45\textwidth]{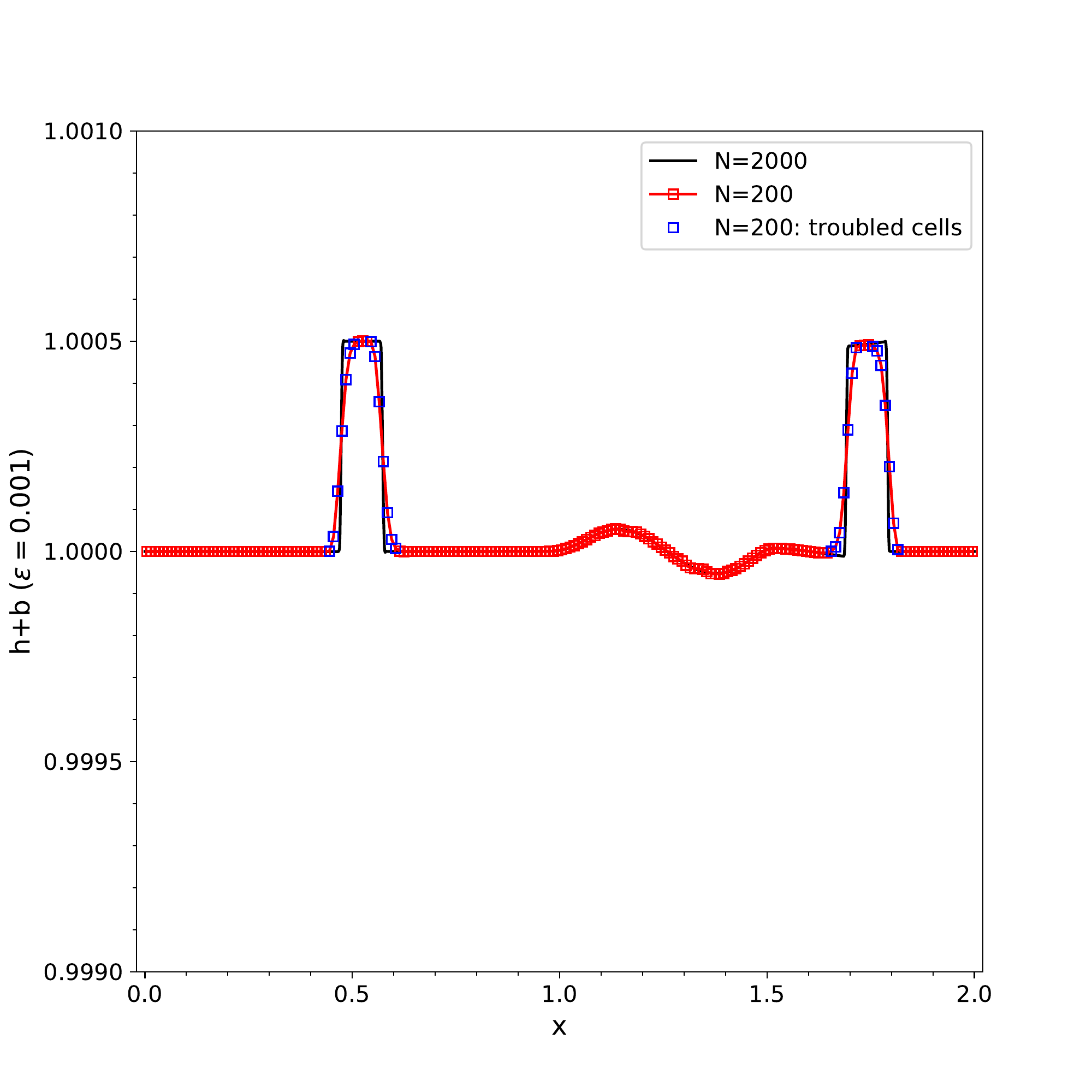}
\includegraphics[width=0.45\textwidth]{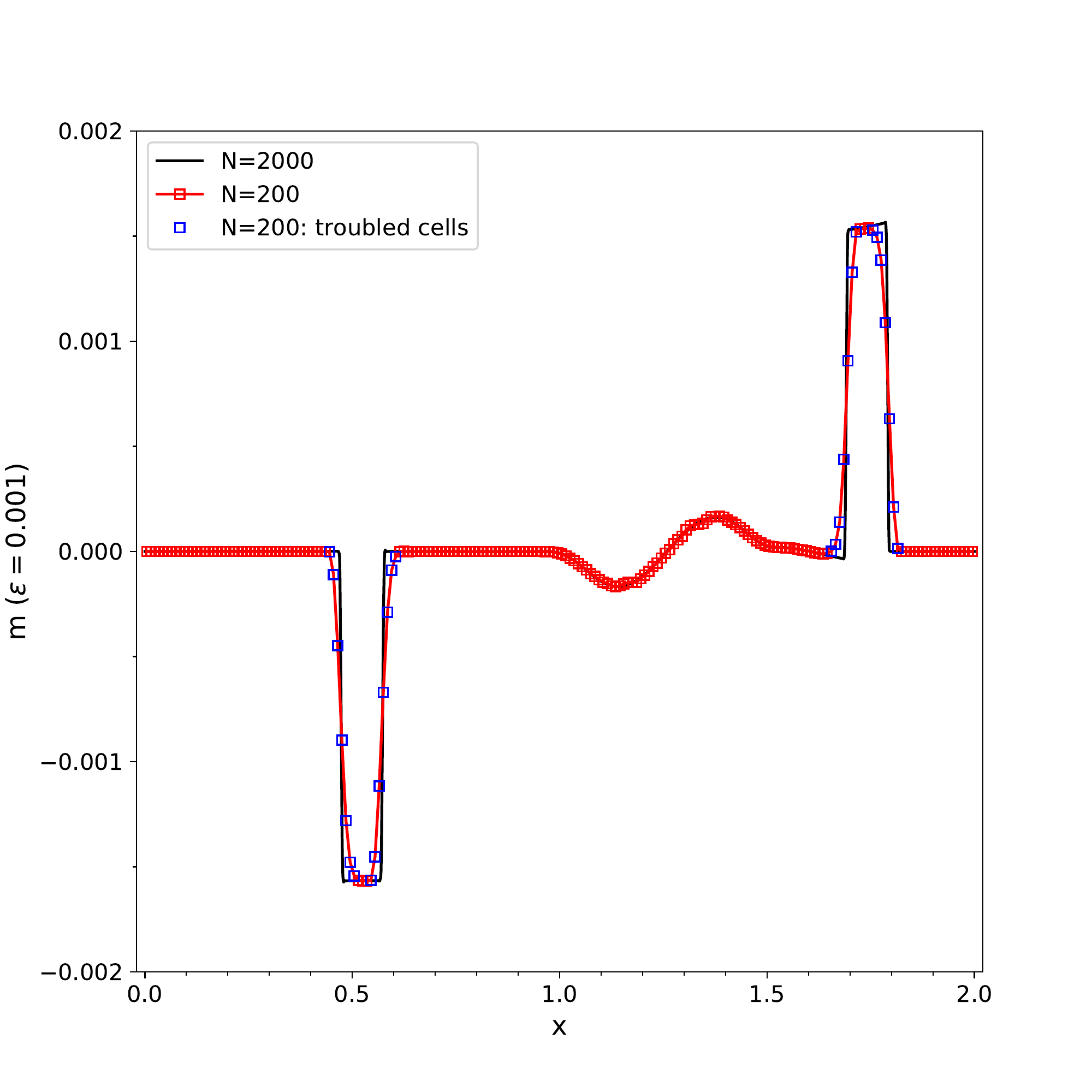}
\caption{Example 4.3 with 
with a small pulse 
$\epsilon=0.001$  at time $t = 0.2$. Left: the water
surface $h + b$; right: the discharge $m$. Blue squares indicate cells where TVB limiter are used at the final time.
}
\label{fig:sp2}
\end{figure}

\subsection*{Example 4.4: A Dam Breaking Problem over a Bump in 1D}
We consider a one-dimensional dam breaking problem over a rectangular bump.
It involves a rapidly varying flow over a discontinuous bottom topography.
Following \cite{WDGX20}, we take the computational domain as [0, 1500], and use the following discontinuous bottom topography:
\[
b(x) = \left\{
\begin{tabular}{ll}
8, & if $|x-750|<1800/8$,\\[.7ex]
0, & otherwise.
\end{tabular}
\right.
\]
We use outflow boundary conditions, and record the results at time $t=60$ in Figure \ref{fig:db}, again using a uniform coarse mesh with $N=200$ cells, and a uniform fine mesh with $N=2000$ cells.  
We observe good agreement of the results on two meshes, which also agrees well with results reported in the literature.  We note that the discharge has a small kink near 
$x=937.5$, where the discontinuous of bottom topography happens. Also, the indicator is successful in identifying solution discontinuities.
\begin{figure}[ht]
\centering
\includegraphics[width=0.45\textwidth]{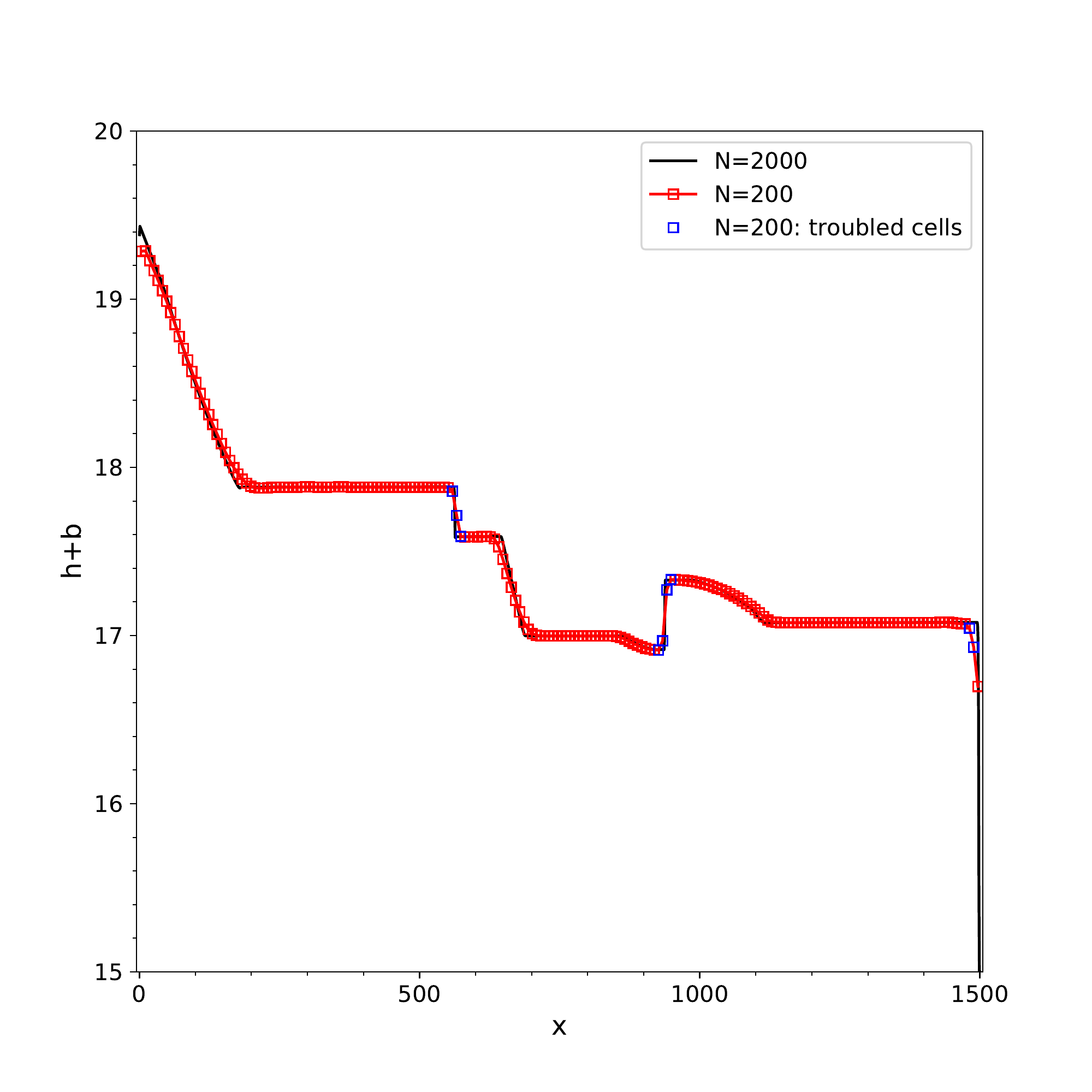}
\includegraphics[width=0.45\textwidth]{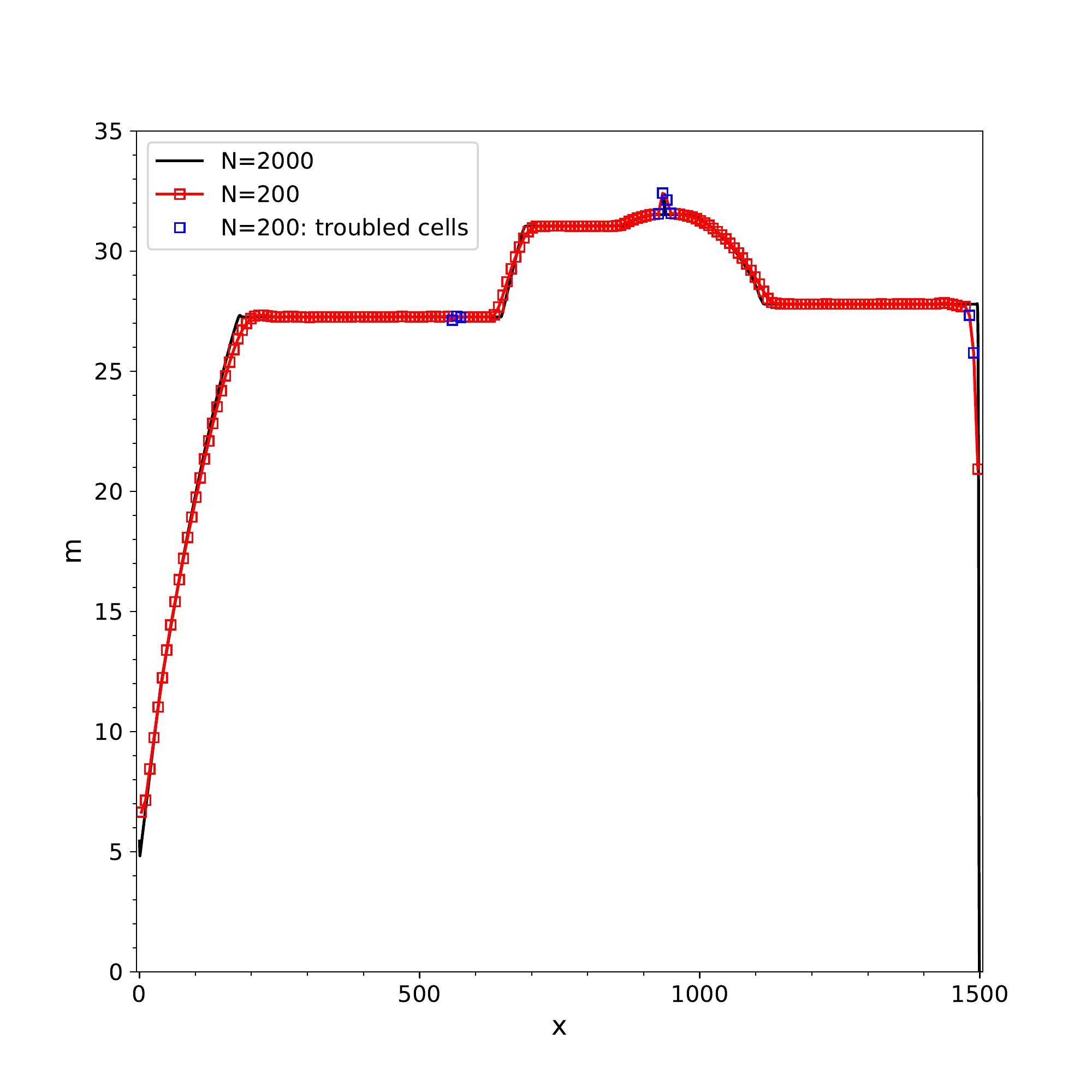}
\caption{Example 4.4 at time $t = 60$. Left: the water
surface $h + b$; right: the discharge $m$. Blue squares indicate cells where TVB limiter are used at the final time.
}
\label{fig:db}
\end{figure}

\subsection*{Example 4.5: Entropy Glitch Test in 1D}
We consider the Riemann problem with a flat bottom considered in \cite{WWAGW18}. 
The computational domain is $[-1,1]$, the bottom topography $b(x)=0$,  and initial condition is 
\[
h(x,0) = \left\{
\begin{tabular}{ll}
1, & if $x<0$,\\[.7ex]
0.1, & otherwise.
\end{tabular}
\right.\quad u(x,0) = 0.
\]
The gravitational constant is taken to be $g=10$, and final time of simulation is 
$t=0.2$.

It was shown in \cite[Fig. 8]{WWAGW18} that standard DG method with a local Lax–Friedrichs numerical flux develops an unphysical discontinuity, called an “entropy glitch”, at $x = 0$, while the entropy stable DG method is able
to capture the solution well on the coarse mesh.
The results in a uniform mesh with $200$ cells are shown in Figure \ref{fig:gt}.
\begin{figure}[ht]
\centering
\includegraphics[width=0.45\textwidth]{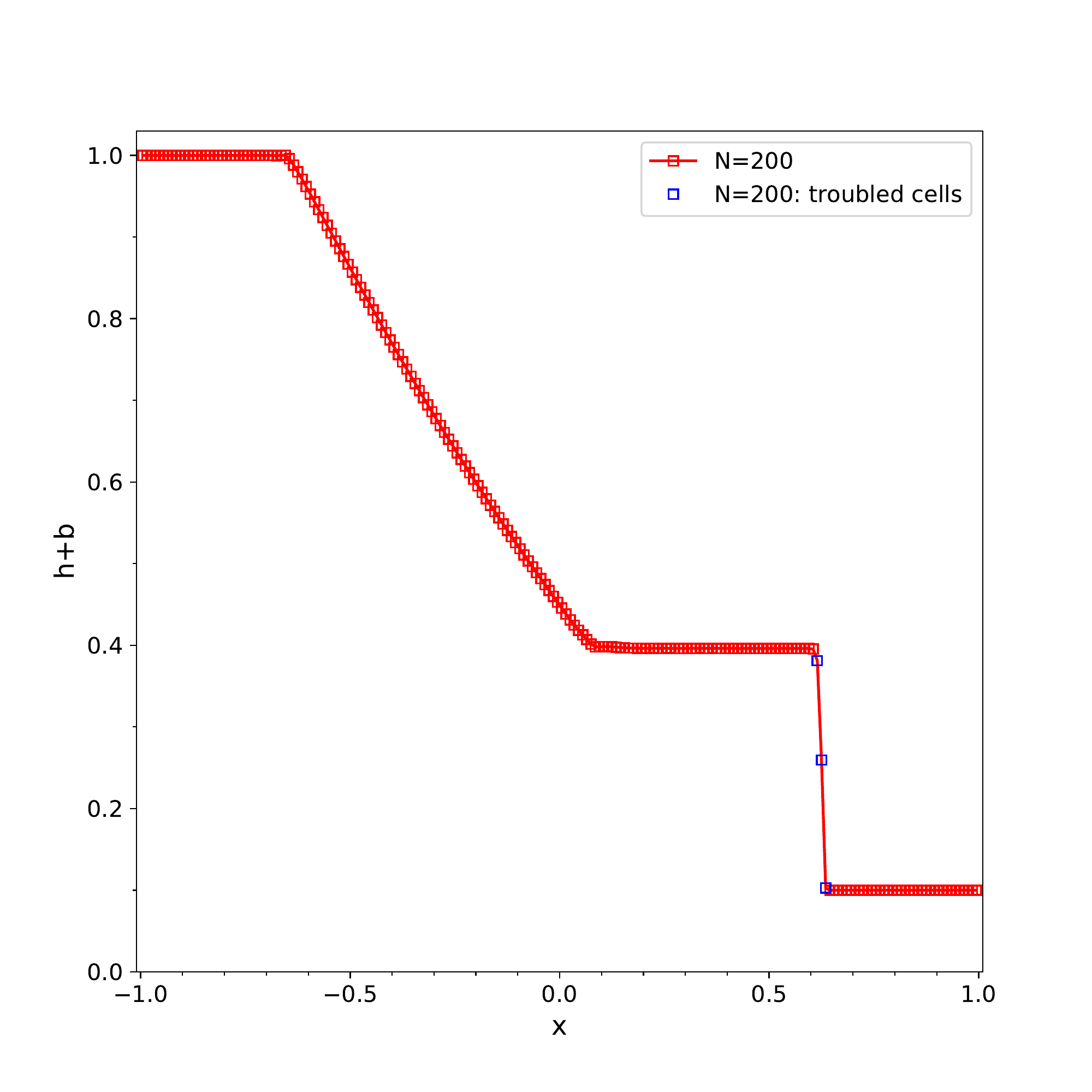}
\includegraphics[width=0.45\textwidth]{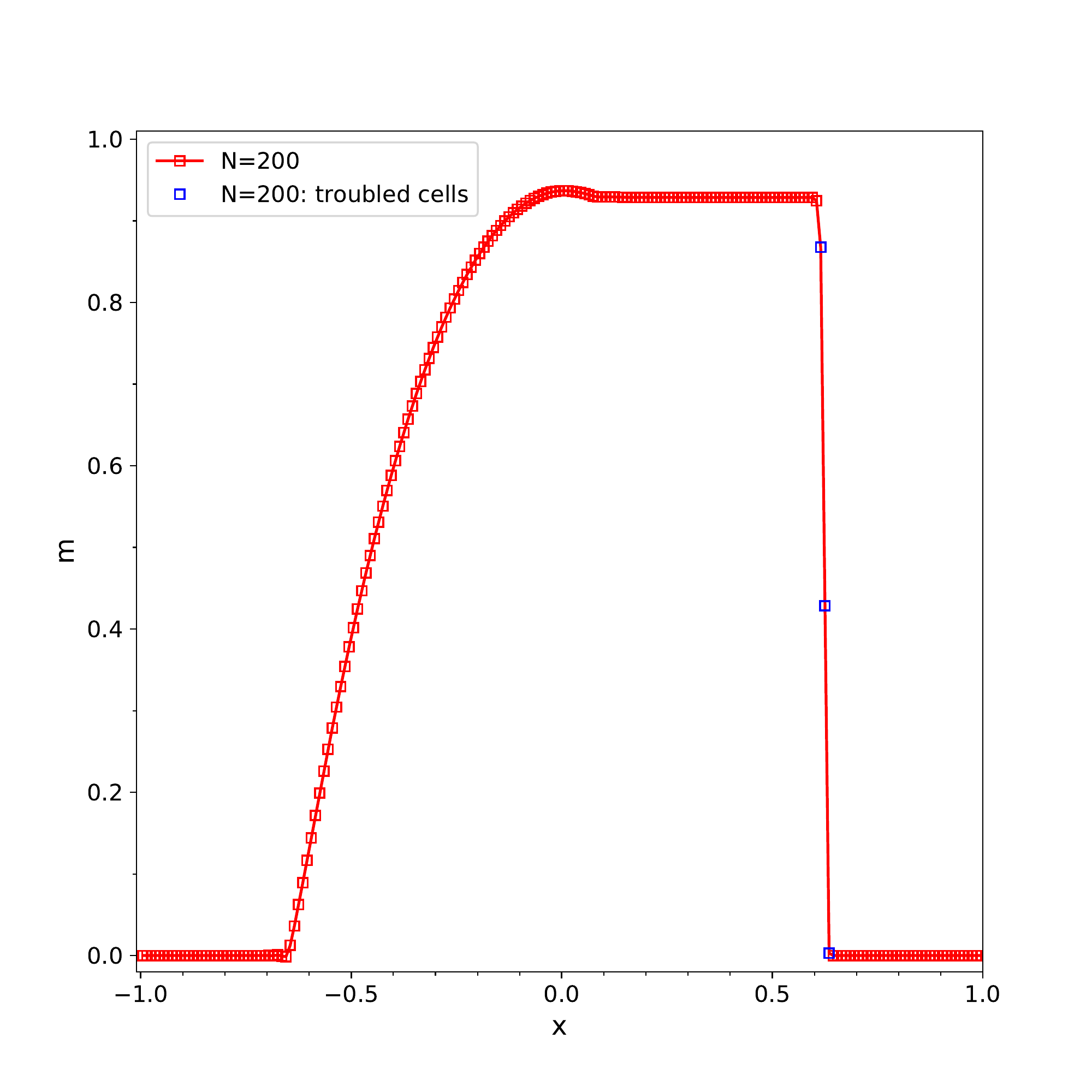}
\caption{Example 4.5 at time $t=0.2$.  
Left: the water
surface $h + b$; right: the discharge $m$.
Blue squares indicate cells where TVB limiter are used at the final time.
}
\label{fig:gt}
\end{figure}

\subsection*{Example 4.6: A Dam Break Problem with a Dry Bed in 1D}
We consider a Riemann Problem with a constant bottom used in \cite{XZS10}. 
Here a dried river bed is used to examine the performance of our scheme in case of moving wet/dry interface. The computation domain is taken to be $[-300, 300]$, and initial condition is 
\[
h(x,0) = \left\{
\begin{tabular}{ll}
10, & if $x<0$,\\[.7ex]
$10^{-12}$, & otherwise.
\end{tabular}
\right.\quad u(x,0) = 0.
\]
Here we use a tiny positive value $10^{-12}$ to indicate the dry bed.
This avoids division by zero in the velocity computation.
This is a very challenging problem as our default algorithm without dry cell limiter or 
velocity limiter fails after a couple of time steps due to an excessive large velocity approximation. 
Here we activate the dry cell limiter \eqref{dry} with $\epsilon_d=5\times 10^{-3}$ to avoid excessive large velocity approximations. The velocity limiter  \eqref{tx}--\eqref{tm} is not needed for this example.
The results at times $t=4$, $t=8$, and $t=12$ on the uniform mesh with $N=200$ cells are plotted in 
Figure \ref{alg3}.
Again, we observe good agreement with results in the literature.
\begin{figure}[ht]
\centering
\includegraphics[width=0.45\textwidth]{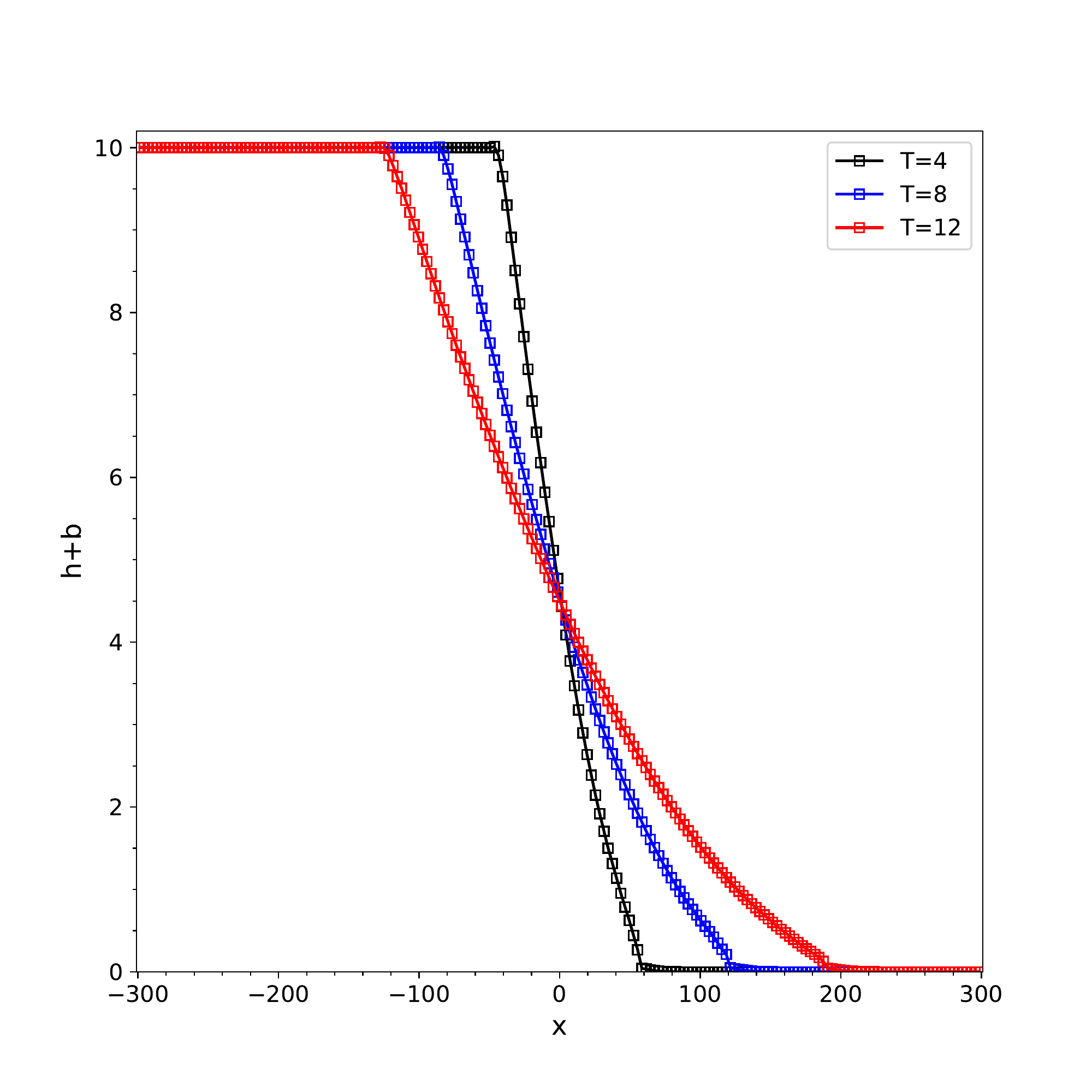}
\includegraphics[width=0.45\textwidth]{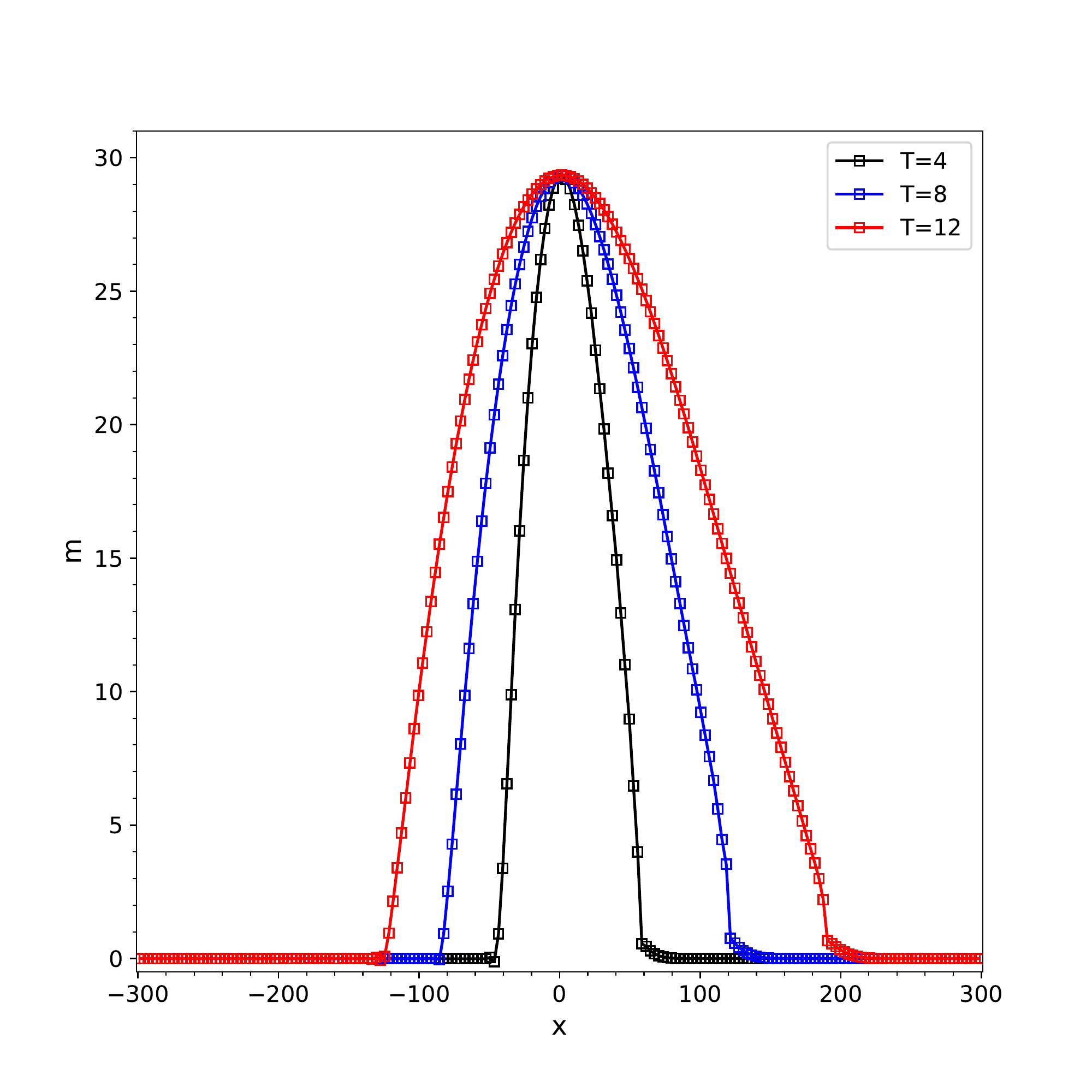}
\caption{Example 4.6 at times $t=4$, $t=8$, and $t=12$.  
Left: the water
surface $h + b$; right: the discharge $m$.
}
\label{fig:pp}
\end{figure}

\subsection*{Example 4.7: Accuracy Test in 2D}
Now we turn to the performance of our scheme on 2D  triangular meshes.
This is a 2D version of the 1D accuracy test considered in Example 4.1.
The domain is a periodic unit square $[0,1]\times [0, 1]$.
The bottom topography and the initial conditions are given as follows:
\begin{align*}
    &b(x) =\;  \sin(2\pi x)+\sin(2\pi y),\\
    & h(x,0) = \; 10 + e^{\sin(2\pi x)}\cos(2\pi y),\\
    & hu(x,0) = \;\sin(\cos(2\pi x))\sin(2\pi y),\\
    & hv(x,0) = \;\cos(2\pi x)\cos(\sin(2\pi y)).
\end{align*}
The  final time is $t=0.05$ where the solution is still smooth.
We apply the plain Algorithm \ref{alg3} without limiter, and compute the $L^2$-errors of water height $h_h$, velocity $u_h$, and discharge $m_h$ on a sequence of uniform structured triangular meshes with $N\times N\times 2$ cells where $N=25\times 2^l$ cells for $l=0,1,2,3$. We take the solution on 
uniform structured triangular meshes with $400\times 400\times 2$
as the {\it reference}  solution when computing these $L^2$-errors.
The results are recorded in Table \ref{tab:3}.
We again observe the expected third order convergence for the water height, and nearly third order convergence for the velocity and discharge.
\begin{table}[ht!]
    \centering
    \begin{tabular}{c|cc|cc|cc}
    N    & $L^2$-err in $h_h$ & rate 
    & $L^2$-err in $u_h$ & rate
&     $L^2$-err in $m_h$ & rate\\
\hline
 25& 1.420e-03 & -- & 1.670e-03 & -- & 1.379e-02 &-- \\
 50& 1.567e-04 & 3.18 & 2.083e-04 & 3.00 & 1.683e-03 & 3.03 \\
100& 1.917e-05 & 3.03 & 2.820e-05 & 2.89 & 2.371e-04 & 2.83 \\
200& 2.363e-06 & 3.02 & 3.880e-06 & 2.86 & 3.403e-05 & 2.80 \\
\hline
    \end{tabular}
    \vspace{1ex}
    \caption{$L^2$ errors and
convergence rate at time $t=0.05$ for  Example 4.7.}
    \label{tab:3}
\end{table}

\subsection*{Example 4.8: A Small Perturbation Test in 2D}
We test the following 2D quasi-stationary test case proposed by LeVeque \cite{LeVeque98}, which is a small
perturbation of the steady state solution.
The computation domain is $[0, 2] \times  [0, 1]$. The bottom topography consists of an elliptical shaped hump
\[
b(x,y) = 0.8\exp(-5(x-0.9)^2-50(x-0.5)^2),
\]
and the initial conditions are 
\[
h(x,y,0) = \left\{
\begin{tabular}{ll}
$1-b(x,y)+0.01$, & if $0.05\le x\le 0.15$,\\[.7ex]
$1-b(x,y)$, & otherwise.
\end{tabular}
\right., \quad u(x,y,0) = v(x,y,0) =0.
\]
Outflow boundary conditions are imposed on the left and right boundary while wall boundary condition (symmetry) are imposed on the top and bottom boundaries. 
Due to symmetry, we perform the calculation on half of the domain 
$\Omega = [0, 2]\times [0, 0.5]$ with symmetry boundary condition on the top boundary 
$y=0.5$.  We consider our scheme on an unstructured triangular mesh with mesh size $\tau_K = 0.01$. The water surface contour at times $t=0.12, 0.24,0.36,0.48,0.60$ are recorded in Figure \ref{fig:sp3}. Our scheme produce non oscillatory solutions and the results agrees well with those in the literature.
\begin{figure}[ht]
\centering
\includegraphics[width=0.45\textwidth]{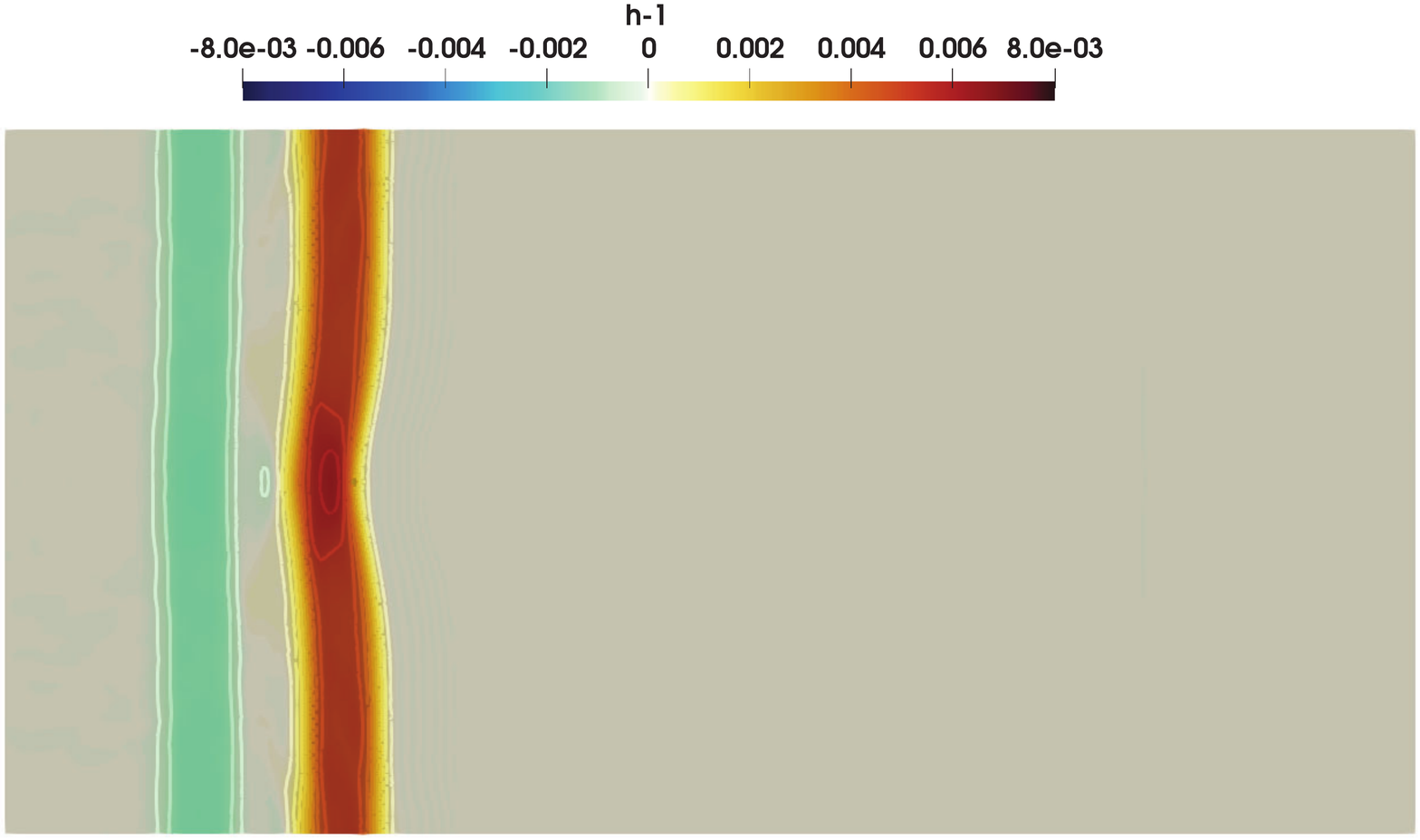}
\includegraphics[width=0.45\textwidth]{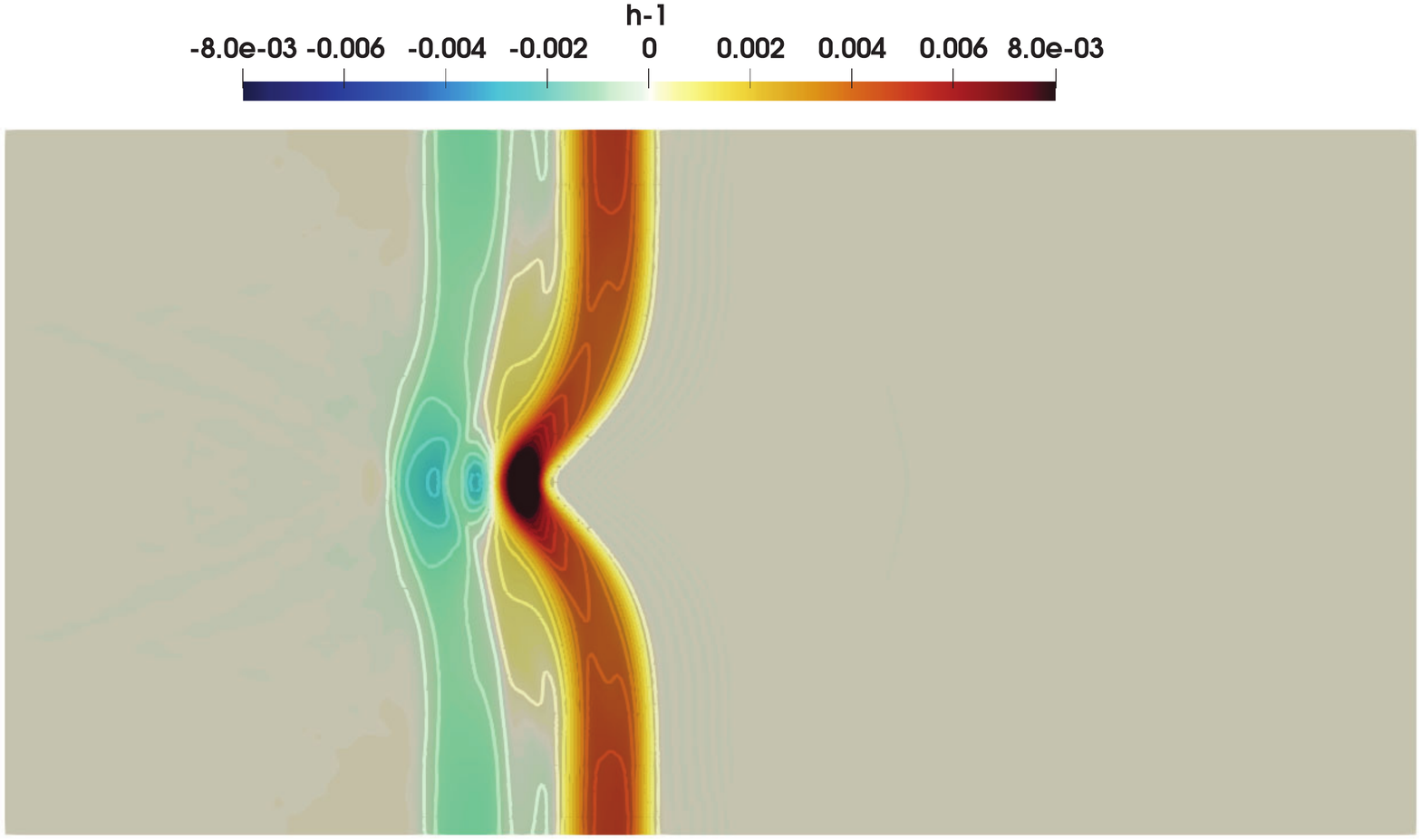}
\includegraphics[width=0.45\textwidth]{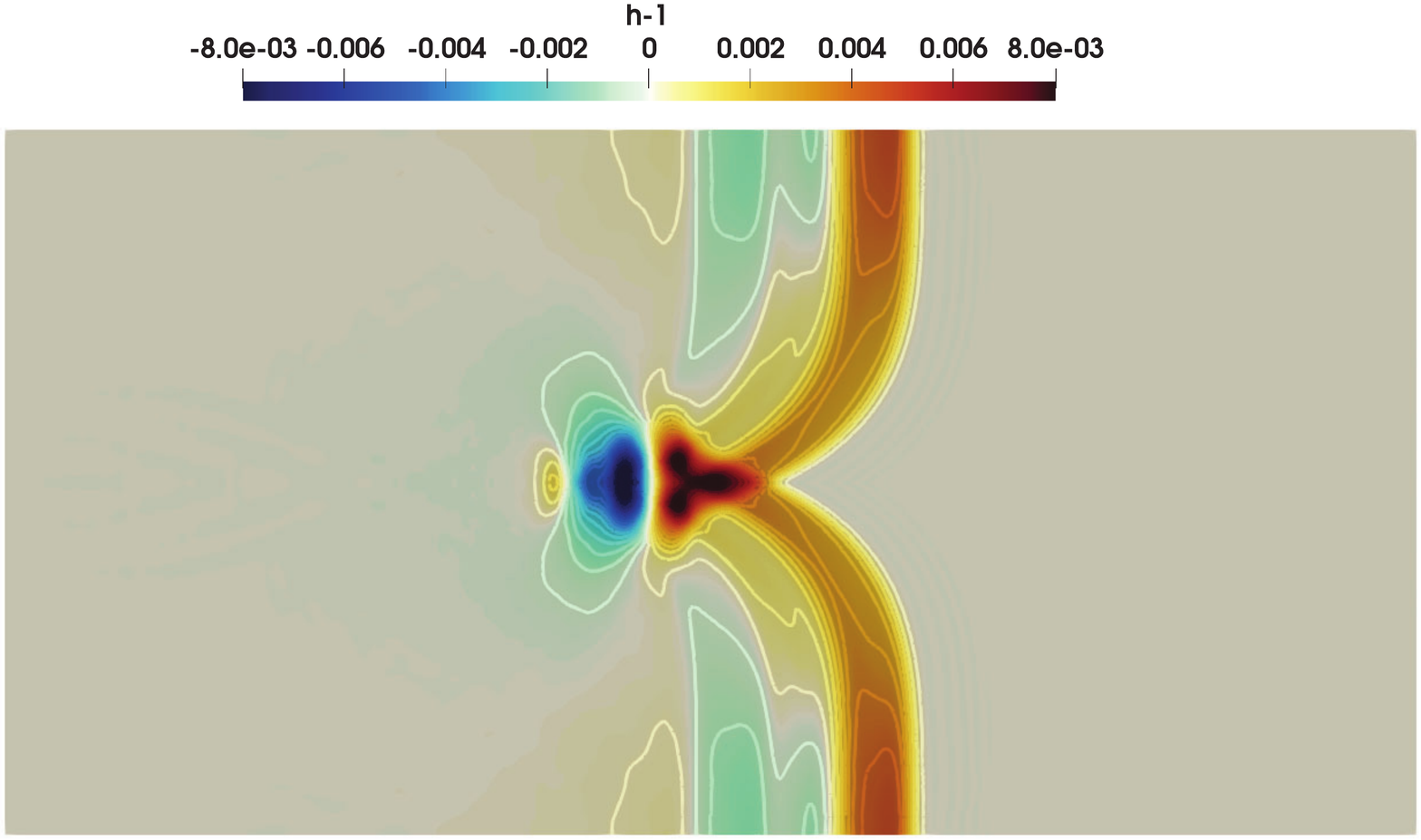}
\includegraphics[width=0.45\textwidth]{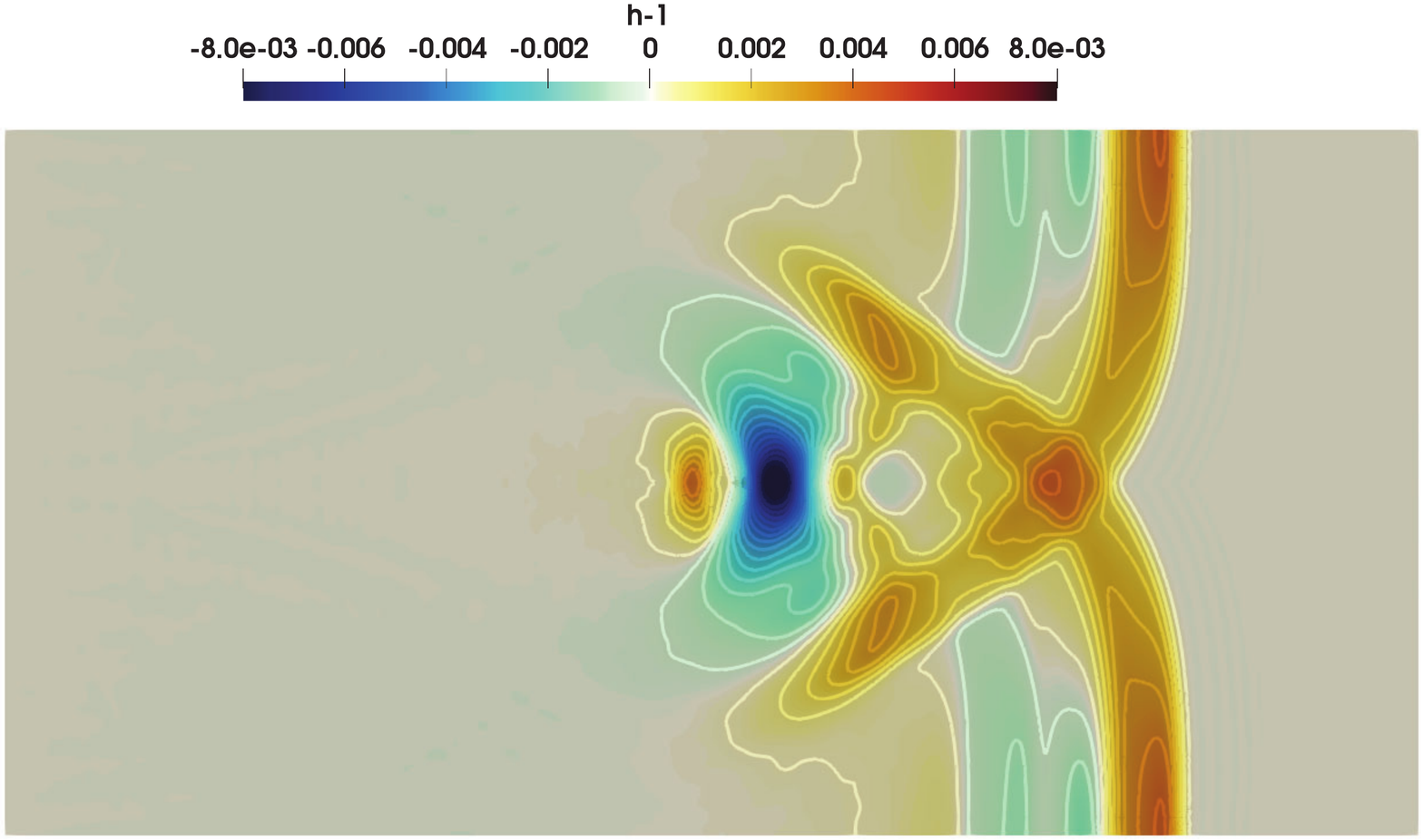}
\includegraphics[width=0.45\textwidth]{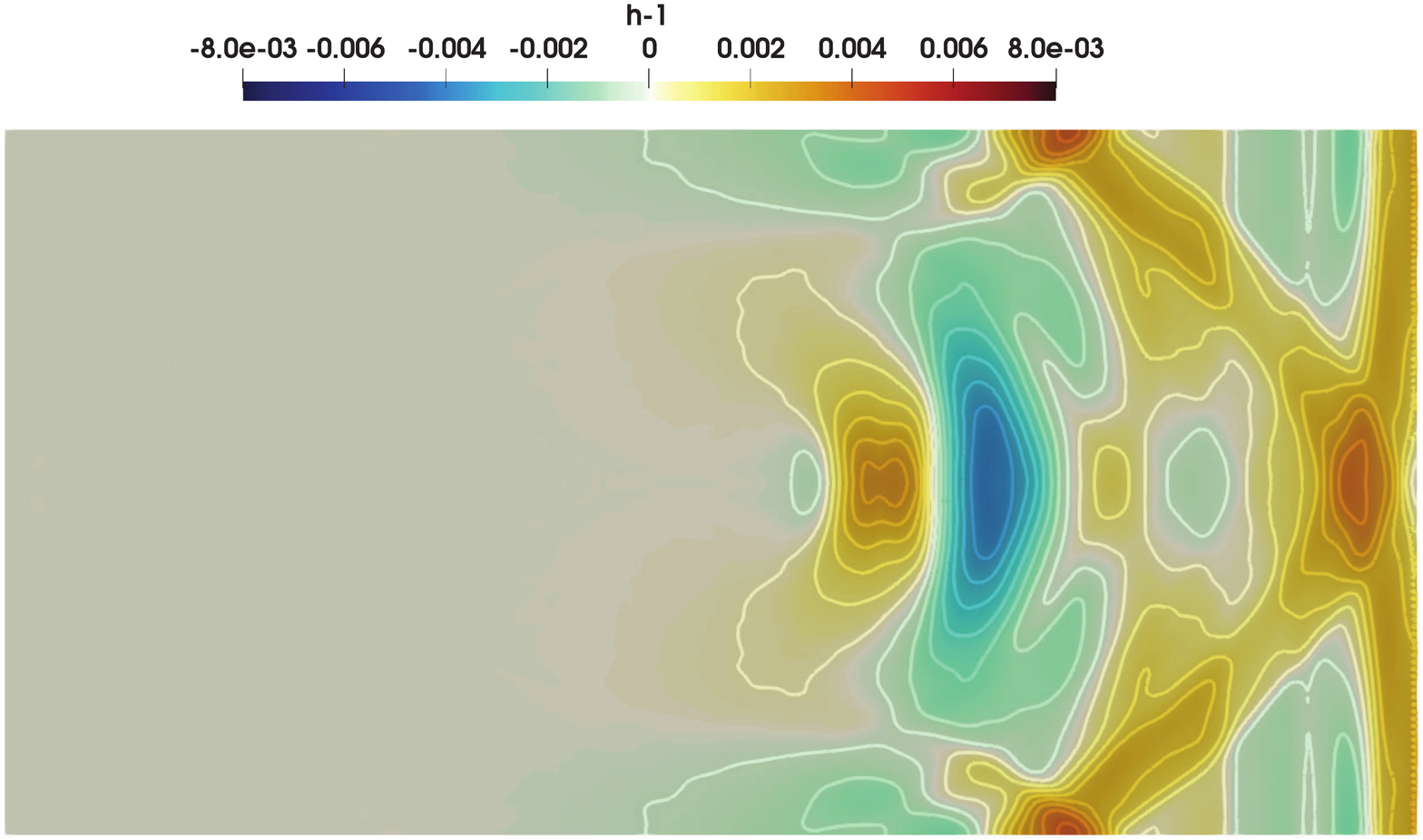}
\caption{Example 4.8.
Contour plot for $h+b-1$. 30 uniform contour lines from 
$h+b-1 = -0.008$ to 
$h+b-1 = 0.008$. 
From left to right, top to bottom: $t=0.12, 0.24,0.36,0.48,0.6$.
}
\label{fig:sp3}
\end{figure}

\subsection*{Example 4.9: Circular Dam Break Test in 2D}
We consider the circular dam break problem used in \cite[Sect. 3.2.1]{M09}.
The space domain is a $50\times 50$ square with a cylindrical dam with radius $r=11$ and centred in the square. The initial water height is 10 inside the dam, and is either 
$1$ outside the dam (a wet bed), or $10^{-12}$ outside the dam (a dry bed). The final time of the simulation is $t=0.69$.
The bottom topography is set to be zero.
Here the dry bed case need special care, where we applied the dry cell limiter \eqref{dry} with $\epsilon_d=5\times 10^{-3}$, and activated the velocity limiter 
\eqref{tx}--\eqref{tm} with $V_{\max}=15$.
These treatment were not used for the wet bed case.
Due to symmetry of the problem, we perform the computation only on a quarter of the domain with symmetric boundary conditions. We take an unstructured  triangular mesh with
mesh size $\tau_K = 0.5$. The results of two cases are presented in Figure \ref{fig:cd}.
\begin{figure}[ht]
\centering
\includegraphics[width=0.45\textwidth]{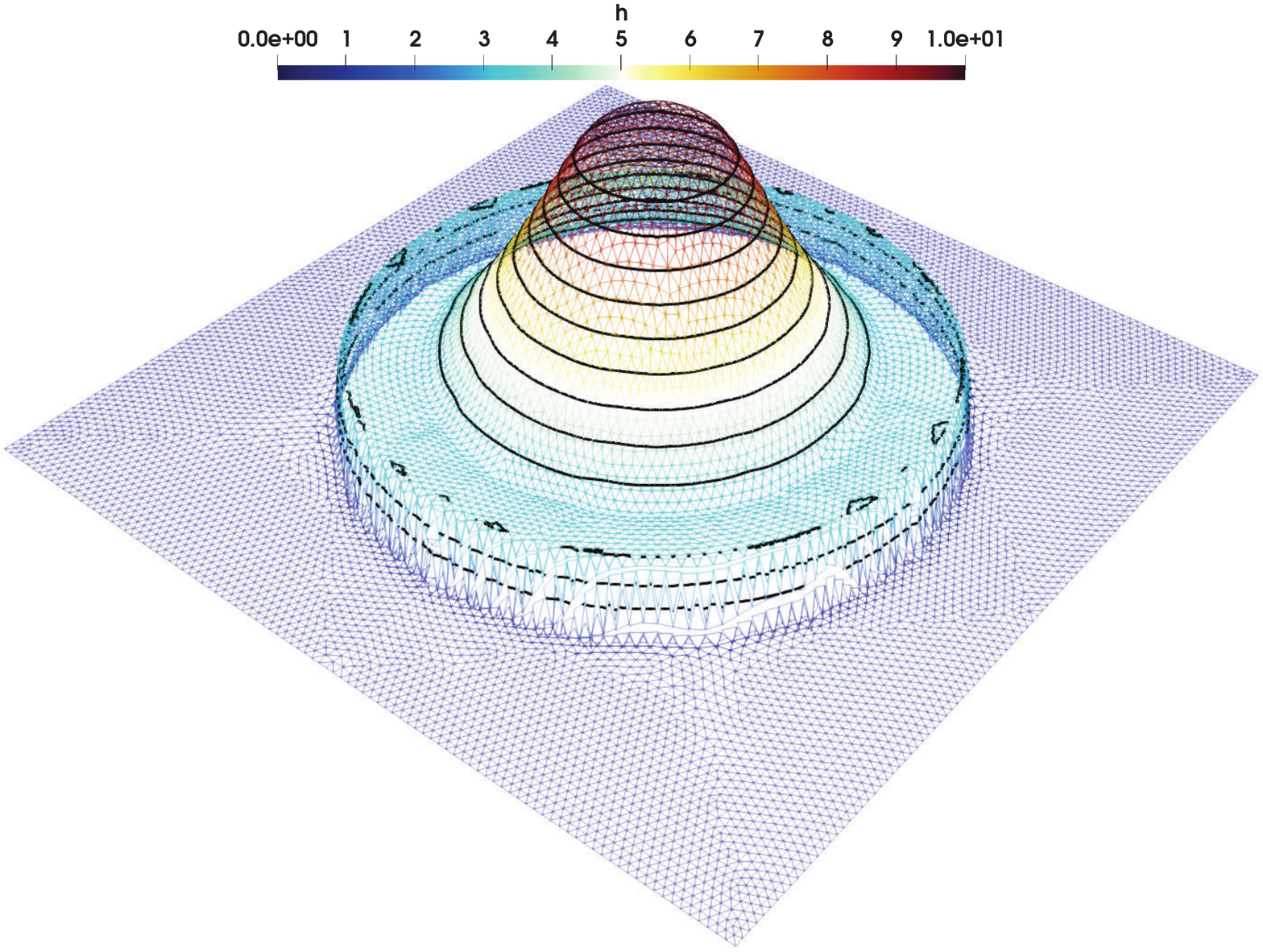}
\includegraphics[width=0.45\textwidth]{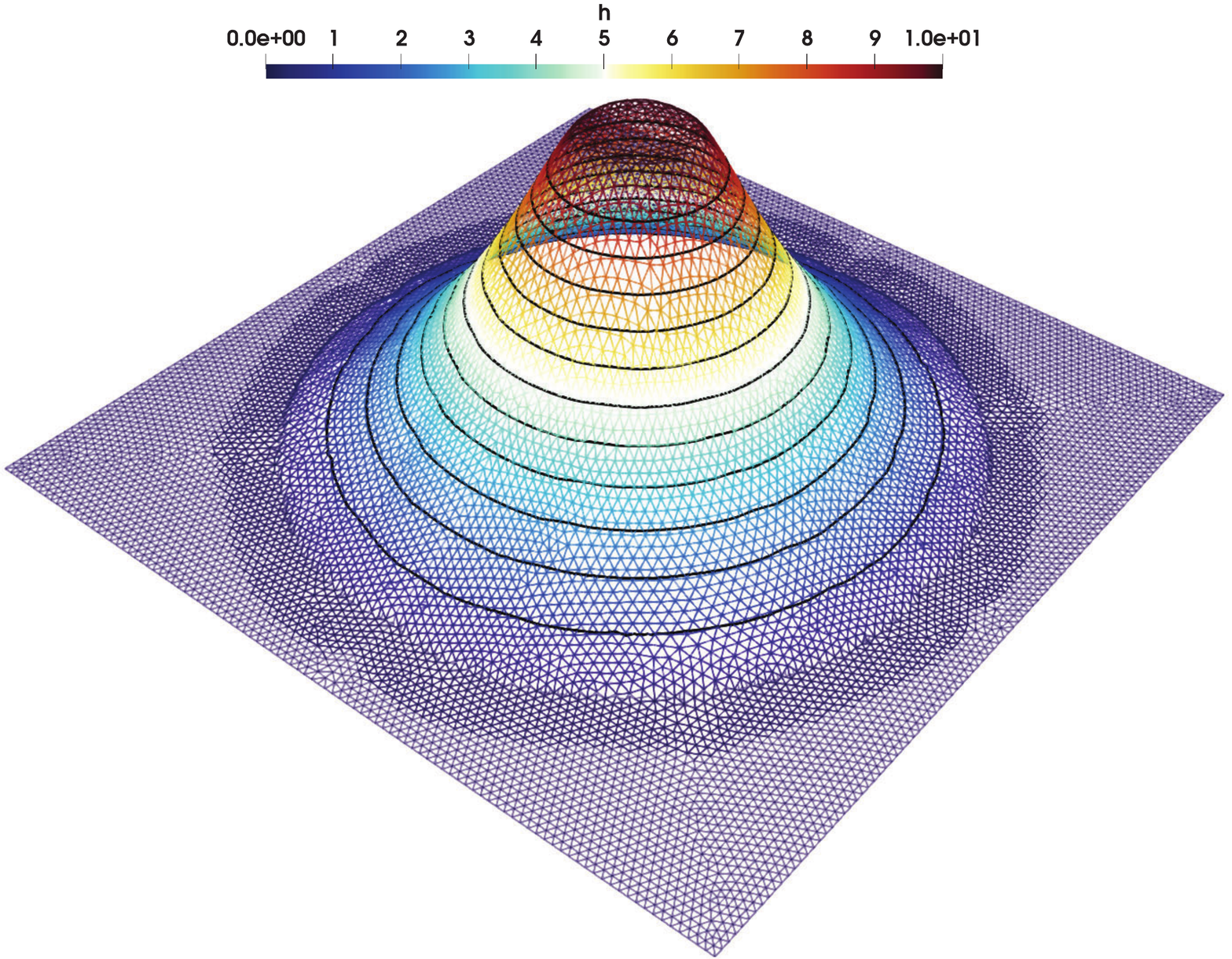}
\caption{Example 4.9.
Contour and surface plots of water height for the circular dam-break problem at $t=0.69$.
Left: web bed. 11 uniform contour lines from $2$ to $9.4$;
Right dry bed. 12 uniform contour lines from $0.01$ to $8.9$.
}
\label{fig:cd}
\end{figure}

\subsection*{Example 4.10: Dam Break on a Closed Channel in 2D}
We consider the problem \cite[Sect. 8.9]{GPC07}
which model dam break on a closed channel.
The domain represents a channel of 75 length and 30 width with
three mounds will wall boundary conditions. 
The shape of the mounds is defined by the function 
$b(x, y) = \max(0, m1, m2, m3)$, where
\begin{align*}
    m_1 =&\; 1- 0.1\sqrt{
(x - 30)^2 + (y-22.5)^2},\\
    m_2 =&\; 1 - 0.1\sqrt{
(x - 30)^2 + (y-7.5)^2},\\
    m_3 =&\; 2.8 - 0.28\sqrt{
(x-47.5)^2 + (y-15)^2}.
\end{align*}
The initial conditions are 
\[
h(x,y,0) = \left\{
\begin{tabular}{ll}
$1.875$, & if $x<16$\\[.7ex]
$10^{-12}$, & otherwise.
\end{tabular}\quad u(x,y,0)=v(x,y,0) = 0
\right.
\]
Due to the moving wet/dry interface, we activate the dry cell limiter with 
$\epsilon_d = 10^{-3}$, and the velocity limiter \eqref{tx}--\eqref{tm} with
$V_{\max}=9$. 
Due to symmetry, we only perform the calculation on half of the domain $[0,75]\times [0,15]$, and apply symmetry boundary conditions on all the boundaries.
Contour plots of the water surface for the simulation results on an unstructured triangular mesh with mesh size $\tau_K = 0.5$ are shown in Figure \ref{fig:xx}
for various times. We observe complex flow structures for this problem, and our scheme 
produces satisfactory results compared with those from \cite[Fig. 15]{GPC07}.

\begin{figure}[ht]
\centering
\includegraphics[width=0.45\textwidth]{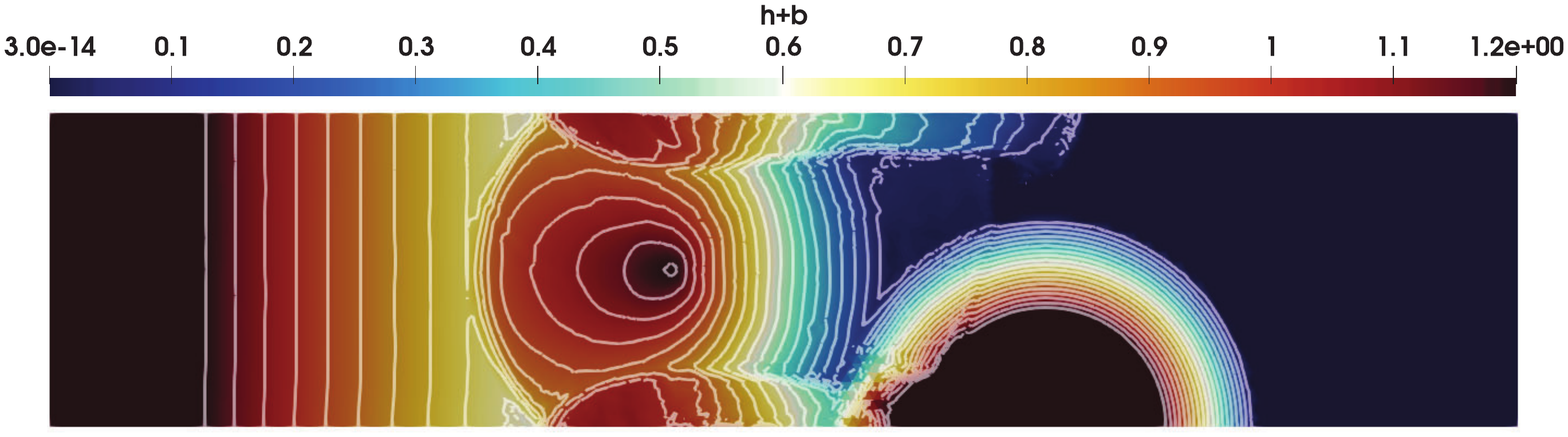}
\includegraphics[width=0.45\textwidth]{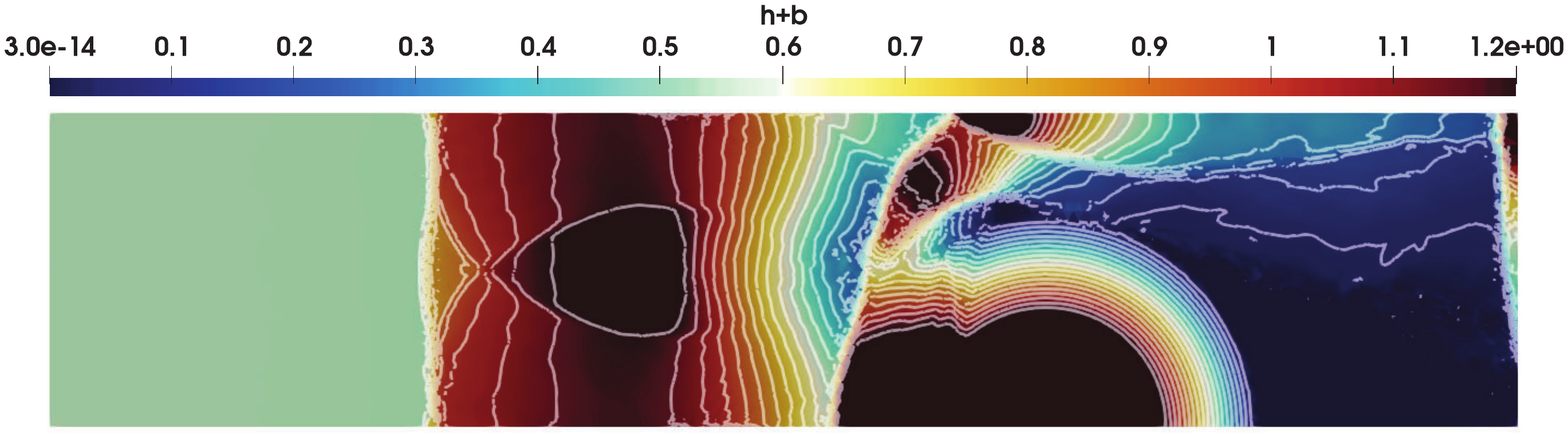}
\includegraphics[width=0.45\textwidth]{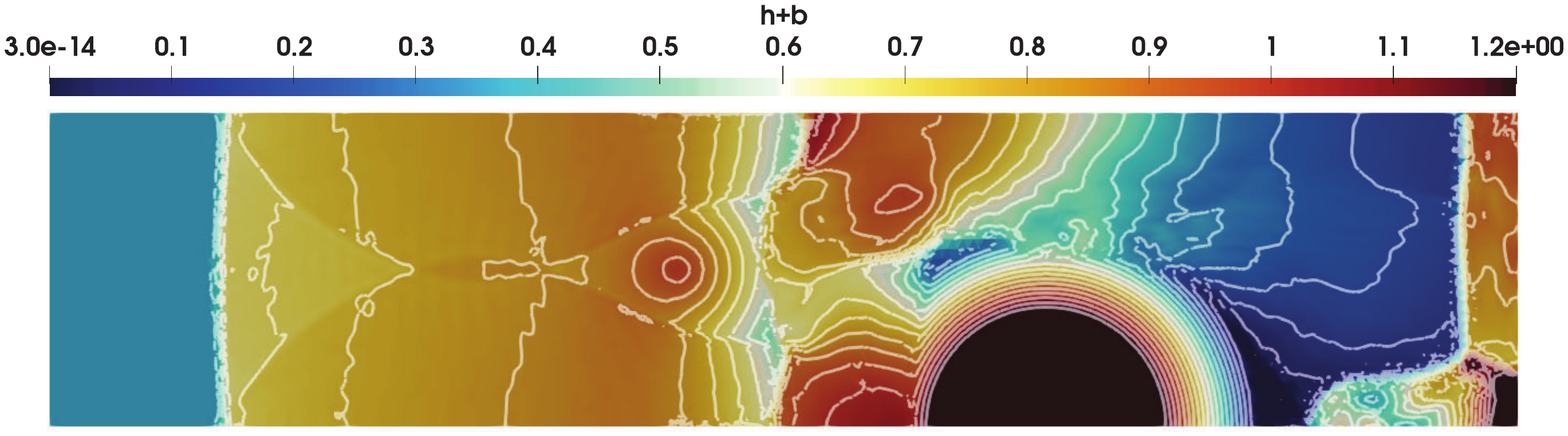}
\includegraphics[width=0.45\textwidth]{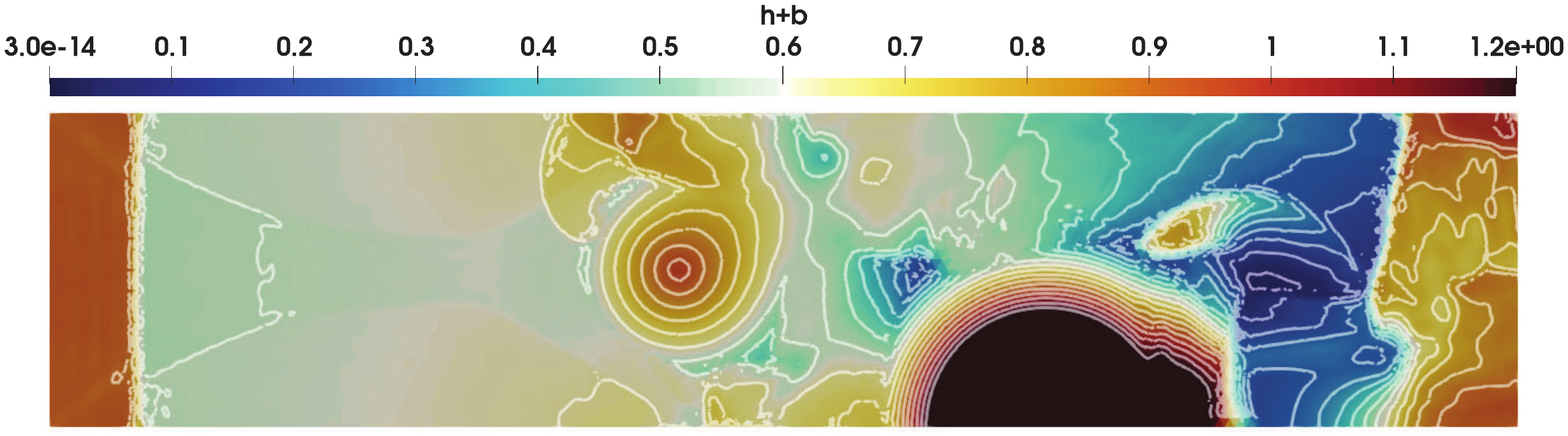}
\includegraphics[width=0.45\textwidth]{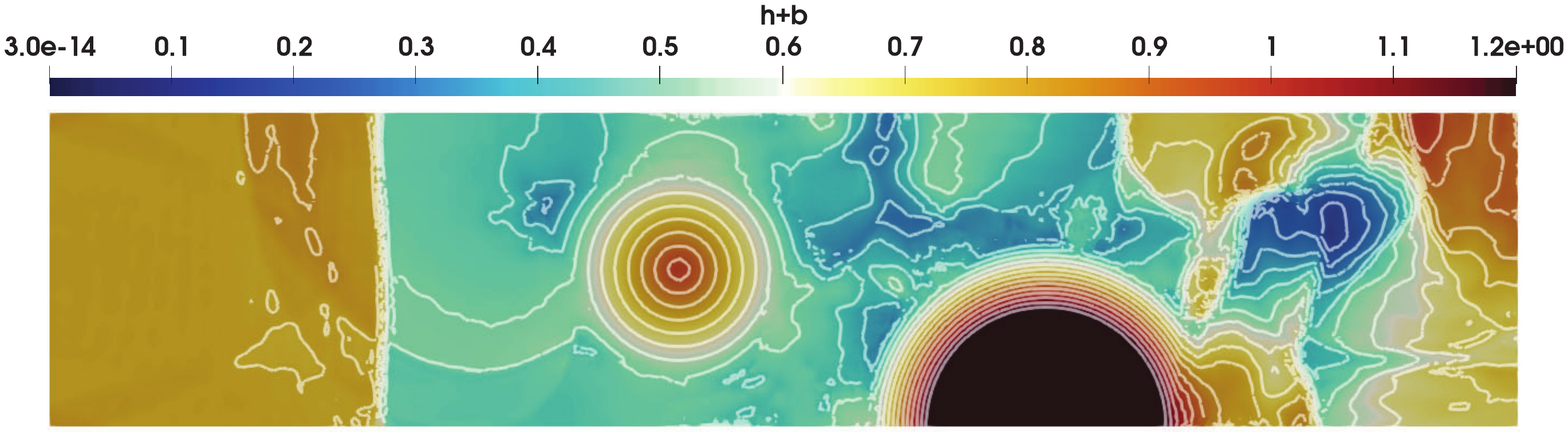}
\includegraphics[width=0.45\textwidth]{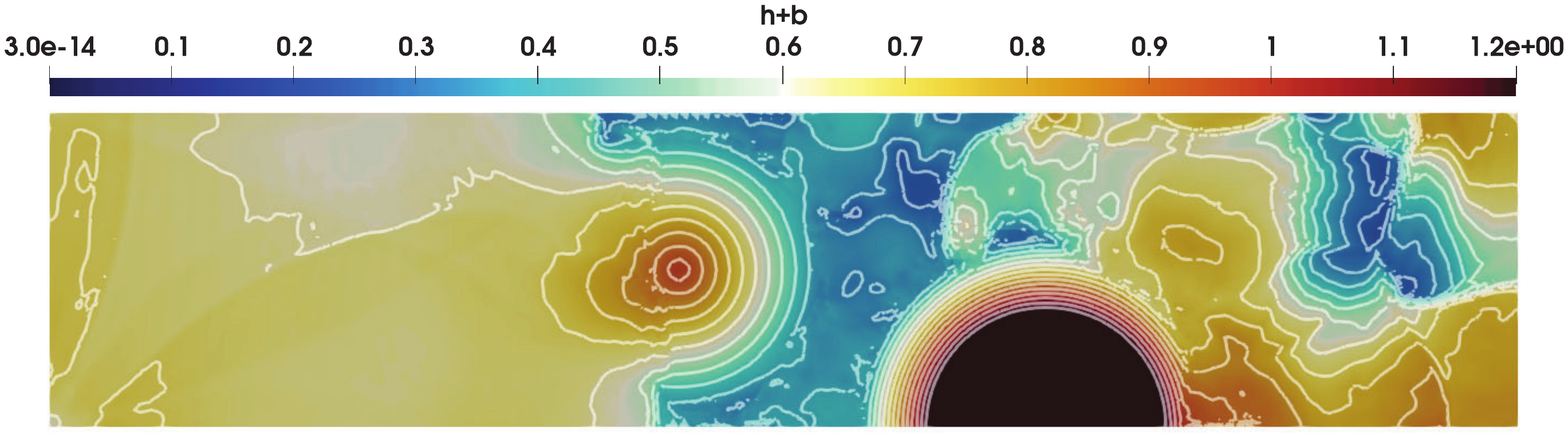}
\includegraphics[width=0.45\textwidth]{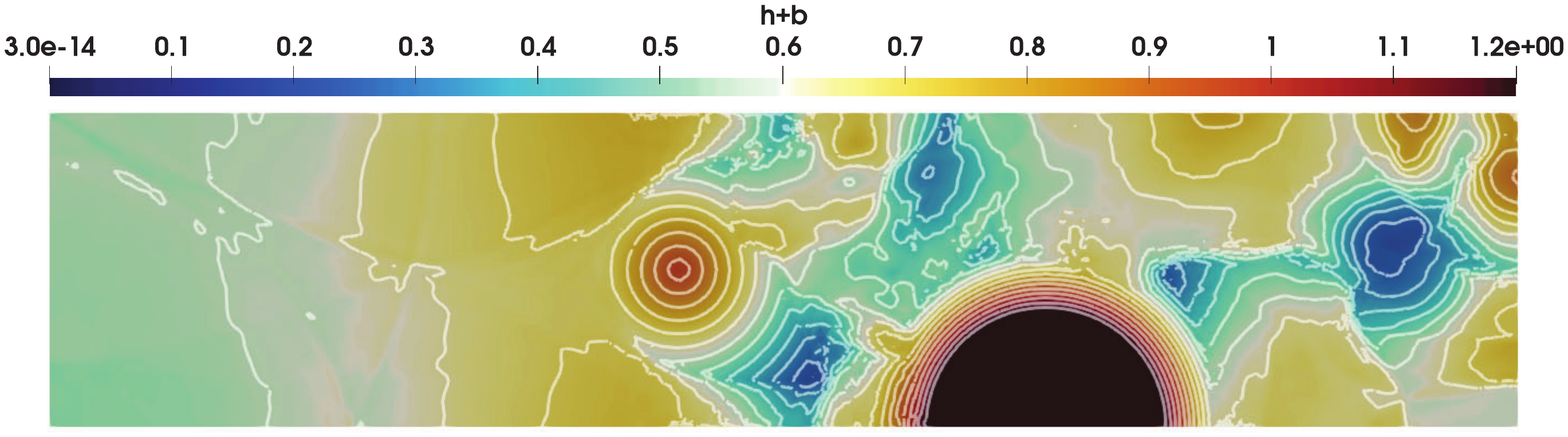}
\includegraphics[width=0.45\textwidth]{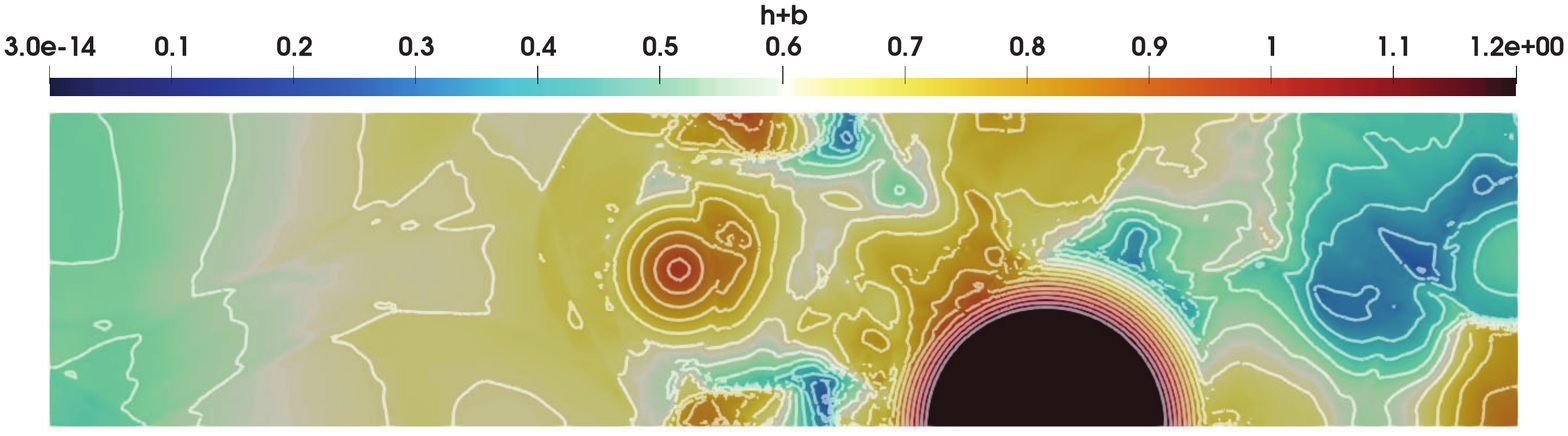}
\caption{Example 4.10.
Contour and surface plots of water surface for the dam-break problem on a closed channel.
20 uniform contour lines from $0$ to $1.2$.
Left to right, top to bottom: $t=5, 10, 15, 20, 25, 30, 35, 40$.
}
\label{fig:xx}
\end{figure}

\section{Conclusion}
We proposed a novel velocity-based DG scheme for the SWEs. Our semidiscrete DG scheme is
locally conservative, entropy stable, and well-balanced. 
We then apply the SSP-RK3 time stepping for the time discretzation, and obtained an explicit locally conservative, well-balanced, and positivity-preserving fully discrete scheme in Algorithm \ref{alg3pt}, where the treatment of strong shocks via a 
characteristic-wise TVB limiter and proper wetting/drying treatment near dry cells was also discussed. Ample numerical examples in 1D and 2D illustrated the good performance of our scheme. Our entropy stable scheme is particularly simple and competitive compared with existing entropy stable DG schemes for SWEs in the literature.

The velocity-based DG scheme can be used to construct robust entropy/energy stable DG schemes for other compressible flow problems, which will be carried out in our future studies.

\

\textbf{Acknowledgement:} The author would like to thank Yulong Xing from Ohio State University for fruitful discussions on the topic.

\begin{thebibliography}{10}

\bibitem{ABF04}
{\sc E.~Audusse, F.~Bouchut, M.-O. Bristeau, R.~Klein, and B.~Perthame}, {\em A
  fast and stable well-balanced scheme with hydrostatic reconstruction for
  shallow water flows}, SIAM J. Sci. Comput., 25 (2004), pp.~2050--2065.

\bibitem{BKW09}
{\sc S.~Bunya, E.~J. Kubatko, J.~J. Westerink, and C.~Dawson}, {\em A wetting
  and drying treatment for the {R}unge-{K}utta discontinuous {G}alerkin
  solution to the shallow water equations}, Comput. Methods Appl. Mech. Engrg.,
  198 (2009), pp.~1548--1562.

\bibitem{Chan18}
{\sc J.~Chan}, {\em On discretely entropy conservative and entropy stable
  discontinuous {G}alerkin methods}, J. Comput. Phys., 362 (2018),
  pp.~346--374.

\bibitem{CS20}
{\sc T.~Chen and C.-W. Shu}, {\em {Review of entropy stable discontinuous
  Galerkin methods for systems of conservation laws on unstructured simplex
  meshes}}, CSIAM Transactions on Applied Mathematics, 1 (2020), pp.~1--52.

\bibitem{CS98}
{\sc B.~Cockburn and C.-W. Shu}, {\em The {R}unge-{K}utta discontinuous
  {G}alerkin method for conservation laws. {V}. {M}ultidimensional systems}, J.
  Comput. Phys., 141 (1998), pp.~199--224.

\bibitem{CS01}
{\sc B.~Cockburn and C.-W. Shu}, {\em Runge-{K}utta discontinuous {G}alerkin
  methods for convection-dominated problems}, J. Sci. Comput., 16 (2001),
  pp.~173--261.

\bibitem{D16}
{\sc C.~M. Dafermos}, {\em Hyperbolic conservation laws in continuum physics},
  vol.~325 of Grundlehren der mathematischen Wissenschaften [Fundamental
  Principles of Mathematical Sciences], Springer-Verlag, Berlin, fourth~ed.,
  2016.

\bibitem{FMT09}
{\sc U.~S. Fjordholm, S.~Mishra, and E.~Tadmor}, {\em Energy preserving and
  energy stable schemes for the shallow water equations}, in Foundations of
  computational mathematics, {H}ong {K}ong 2008, vol.~363 of London Math. Soc.
  Lecture Note Ser., Cambridge Univ. Press, Cambridge, 2009, pp.~93--139.

\bibitem{FMT11}
{\sc U.~S. Fjordholm, S.~Mishra, and E.~Tadmor}, {\em Well-balanced and energy
  stable schemes for the shallow water equations with discontinuous
  topography}, J. Comput. Phys., 230 (2011), pp.~5587--5609.

\bibitem{FS17}
{\sc G.~Fu and C.-W. Shu}, {\em A new troubled-cell indicator for discontinuous
  {G}alerkin methods for hyperbolic conservation laws}, J. Comput. Phys., 347
  (2017), pp.~305--327.

\bibitem{GPC07}
{\sc J.~M. Gallardo, C.~Par\'{e}s, and M.~Castro}, {\em On a well-balanced
  high-order finite volume scheme for shallow water equations with topography
  and dry areas}, J. Comput. Phys., 227 (2007), pp.~574--601.

\bibitem{GWK16}
{\sc G.~J. Gassner, A.~R. Winters, and D.~A. Kopriva}, {\em A well balanced and
  entropy conservative discontinuous {G}alerkin spectral element method for the
  shallow water equations}, Appl. Math. Comput., 272 (2016), pp.~291--308.

\bibitem{GST01}
{\sc S.~Gottlieb, C.-W. Shu, and E.~Tadmor}, {\em Strong stability-preserving
  high-order time discretization methods}, SIAM Rev., 43 (2001), pp.~89--112.

\bibitem{Hughes86}
{\sc T.~J.~R. Hughes, L.~P. Franca, and M.~Mallet}, {\em A new finite element
  formulation for computational fluid dynamics. {I}. {S}ymmetric forms of the
  compressible {E}uler and {N}avier-{S}tokes equations and the second law of
  thermodynamics}, Comput. Methods Appl. Mech. Engrg., 54 (1986), pp.~223--234.

\bibitem{KP07}
{\sc A.~Kurganov and G.~Petrova}, {\em A second-order well-balanced positivity
  preserving central-upwind scheme for the {S}aint-{V}enant system}, Commun.
  Math. Sci., 5 (2007), pp.~133--160.

\bibitem{LW60}
{\sc P.~Lax and B.~Wendroff}, {\em Systems of conservation laws}, Comm. Pure
  Appl. Math., 13 (1960), pp.~217--237.

\bibitem{LeVeque98}
{\sc R.~J. LeVeque}, {\em Balancing source terms and flux gradients in
  high-resolution {G}odunov methods: the quasi-steady wave-propagation
  algorithm}, J. Comput. Phys., 146 (1998), pp.~346--365.

\bibitem{QS05}
{\sc J.~Qiu and C.-W. Shu}, {\em A comparison of troubled-cell indicators for
  {R}unge-{K}utta discontinuous {G}alerkin methods using weighted essentially
  nonoscillatory limiters}, SIAM J. Sci. Comput., 27 (2005), pp.~995--1013.

\bibitem{RBV08}
{\sc S.~Rhebergen, O.~Bokhove, and J.~J.~W. van~der Vegt}, {\em Discontinuous
  {G}alerkin finite element methods for hyperbolic nonconservative partial
  differential equations}, J. Comput. Phys., 227 (2008), pp.~1887--1922.

\bibitem{Schoberl16}
{\sc J.~Sch{\"o}berl}, {\em {C}++11 {I}mplementation of {F}inite {E}lements in
  {NGS}olve}, 2014.
\newblock {ASC Report 30/2014, Institute for Analysis and Scientific Computing,
  Vienna University of Technology}.

\bibitem{M09}
{\sc M.~Sea\"id}, {\em {Non-oscillatory relaxation methods for the
  shallow-water equations in one and two space dimensions}}, Int. J. Numer.
  Meth. Fluids, 46 (2004), pp.~457--484.

\bibitem{Tadmor86}
{\sc E.~Tadmor}, {\em Entropy conservative finite element schemes}, in
  Numerical Methods for Compressible Flows - Finite Difference Element and
  Volume Techniques, T.~Tezduyar and T.~Hughes, eds., vol.~AMD-Vol. 78 of
  Proceedings of the winter annual meeting of the American Society of
  Mechanical Engineering, 1986, pp.~149--158.

\bibitem{Tadmor87}
{\sc E.~Tadmor}, {\em The numerical viscosity of entropy stable schemes for
  systems of conservation laws. {I}}, Math. Comp., 49 (1987), pp.~91--103.

\bibitem{Tadmor16}
{\sc E.~Tadmor}, {\em Entropy stable schemes}, in Handbook of numerical methods
  for hyperbolic problems, vol.~17 of Handb. Numer. Anal.,
  Elsevier/North-Holland, Amsterdam, 2016, pp.~467--493.

\bibitem{TadmorZhong08}
{\sc E.~Tadmor and W.~Zhong}, {\em Energy-preserving and stable approximations
  for the two-dimensional shallow water equations}, in Mathematics and
  computation, a contemporary view, vol.~3 of Abel Symp., Springer, Berlin,
  2008, pp.~67--94.

\bibitem{WDGX20}
{\sc X.~Wen, W.~S. Don, Z.~Gao, and Y.~Xing}, {\em Entropy stable and
  well-balanced discontinuous {G}alerkin methods for the nonlinear shallow
  water equations}, J. Sci. Comput., 83 (2020), pp.~Paper No. 66, 32.

\bibitem{WWAGK17}
{\sc N.~Wintermeyer, A.~R. Winters, G.~J. Gassner, and D.~A. Kopriva}, {\em An
  entropy stable nodal discontinuous {G}alerkin method for the two dimensional
  shallow water equations on unstructured curvilinear meshes with discontinuous
  bathymetry}, J. Comput. Phys., 340 (2017), pp.~200--242.

\bibitem{WWAGW18}
{\sc N.~Wintermeyer, A.~R. Winters, G.~J. Gassner, and T.~Warburton}, {\em An
  entropy stable discontinuous {G}alerkin method for the shallow water
  equations on curvilinear meshes with wet/dry fronts accelerated by {GPU}s},
  J. Comput. Phys., 375 (2018), pp.~447--480.

\bibitem{WKC21}
{\sc X.~Wu, E.~J. Kubatko, and J.~Chan}, {\em High-order entropy stable
  discontinuous {G}alerkin methods for the shallow water equations: curved
  triangular meshes and {GPU} acceleration}, Comput. Math. Appl., 82 (2021),
  pp.~179--199.

\bibitem{Xing17}
{\sc Y.~Xing}, {\em Numerical methods for the nonlinear shallow water
  equations}, in Handbook of numerical methods for hyperbolic problems, vol.~18
  of Handb. Numer. Anal., Elsevier/North-Holland, Amsterdam, 2017,
  pp.~361--384.

\bibitem{XS14}
{\sc Y.~Xing and C.-W. Shu}, {\em A survey of high order schemes for the
  shallow water equations}, J. Math. Study, 47 (2014), pp.~221--249.

\bibitem{XZ13}
{\sc Y.~Xing and X.~Zhang}, {\em Positivity-preserving well-balanced
  discontinuous {G}alerkin methods for the shallow water equations on
  unstructured triangular meshes}, J. Sci. Comput., 57 (2013), pp.~19--41.

\bibitem{XZS10}
{\sc Y.~Xing, X.~Zhang, and C.-W. Shu}, {\em Positivity-preserving high order
  well-balanced discontinuous galerkin methods for the shallow water
  equations}, Advances in Water Resources, 33 (2010), pp.~1476--1493.

\bibitem{XZ17}
{\sc Z.~Xu and X.~Zhang}, {\em Bound-preserving high-order schemes}, in
  Handbook of numerical methods for hyperbolic problems, vol.~18 of Handb.
  Numer. Anal., Elsevier/North-Holland, Amsterdam, 2017, pp.~81--102.

\bibitem{ZS11}
{\sc X.~Zhang and C.-W. Shu}, {\em Maximum-principle-satisfying and
  positivity-preserving high-order schemes for conservation laws: survey and
  new developments}, Proc. R. Soc. Lond. Ser. A Math. Phys. Eng. Sci., 467
  (2011), pp.~2752--2776.

\end{thebibliography}
\bibliographystyle{siam}

\end{document}